	\newtheoremstyle{slanted}
	{}
	{}
	{\slshape}
	{}
	{\bfseries}
	{.}
	{ }
	{}
	\theoremstyle{slanted}
	\newtheorem{theo}{Theorem}[section]
	\newtheorem{prop}[theo]{Proposition}
	\newtheorem{lemma}[theo]{Lemma}
	\newtheorem{corollary}[theo]{Corollary}
	\def\egdef{:=}
	\def\Id{\mathop{\mbox{Id}}}
	\newcommand{\tend}[3][]{\xrightarrow[#2\to#3]{#1}}
	\newcommand{\EE}{\mathbb{E}}
	\def\ind#1{\mathbbmss{1}_{#1}}
	\newcommand{\ZZ}{\mathbb{Z}}
	\renewcommand{\L}{\mathscr{L}}
	\newcommand{\A}{\mathscr{A}}
\def\order{\mbox{order}}
\title{Weak limits of powers of Chacon's automorphism}
\author{É. Janvresse}
\author{A. A. Prikhod'ko}
\author{T. de la Rue}
\author{V. V. Ryzhikov}
\address{É. Janvresse, T. de la Rue:
Laboratoire de Math\'ematiques Rapha\"el Salem,
Normandie Université, Universit\'e de Rouen, CNRS --
Avenue de l'Universit\'e --
76801 Saint \'Etienne du Rouvray, France.}
\email{elise.janvresse@univ-rouen.fr\\Thierry.de-la-Rue@univ-rouen.fr}
\address{A. A. Prikhod'ko, V. V. Ryzhikov:
Moscow State University, Faculty of Mechanics and Mathematics,
Leninskie Gory, Moscow, 119991 Russia.}
\email{sasha.prihodko@gmail.com\\vryzh@mail.ru}
\thanks{Part of this research was done during the visit of the second author at Rouen University on a CNRS visiting researcher position.}
\begin{document}

\bibliographystyle{amsplain}

\begin{abstract}
  We completely describe the weak closure of the powers of the Koopman operator associated to Chacon's classical automorphism. We show that weak limits of these powers are the ortho-projector to constants and an explicit family of polynomials. As a consequence, we answer negatively the question of $\alpha$-weak mixing for Chacon's automorphism.
\end{abstract}

\maketitle

\section{Introduction}

The classical version of Chacon's automorphism, which is the main subject of the present work, was described by Friedman in~\cite{Friedman1970}. It is a famous example of a rank-one automorphism, for which we recall briefly the construction by cutting and stacking: We start with a Rokhlin tower of height $h_0\egdef 1$, called Tower $0$. At step $n$, Tower $n-1$ (of height $h_{n-1}$) is cut into 3 sub-columns, a spacer is inserted above the middle column before stacking all parts to get Tower $n$, of height $h_n=3h_{n-1}+1$. 
This transformation, denoted hereafter by $T$ and acting on a standard Borel probability space $(X, \A, \mu)$, is known to present an interesting combination of ergodic and spectral properties. It is weakly mixing but not strongly mixing (see \cite{Chacon1969}, where the historical version of Chacon's automorphism is constructed with only 2 sub-columns, but whose arguments also apply in the classical case).  Del~Junco proved in \cite{DelJunco1978} that $T$ has trivial centralizer, then improved this result by showing with Rahe and Swanson that it has minimal self-joinings~\cite{DJ-R-S1980}. The second and fourth author 
proved 
in~\cite{P-R2000} that the convolution powers of its maximal spectral type are pairwise mutually singular.  Their method involves the identification, in the weak closure of the powers of the associated Koopman operator $\hat T$, of an infinite family of polynomials in $\hat T$.

 An automorphism $S$ is said to be \emph{$\alpha$-weakly mixing} (for some $0\le\alpha\le1$) if there exists a sequence $(k_j)$ of integers such that 
$\hat S^{k_j}$ converges weakly to $\alpha\Theta+(1-\alpha)\Id$, where $\Theta$ is the ortho-projector to constants.
The disjointness of the convolution powers is automatically satisfied in the case of $\alpha$-weakly mixing transformations with $0<\alpha<1$ (see Katok~\cite{Katok} and Stepin~\cite{Stepin1986}).  This property has been applied for  numerous counterexamples  in ergodic theory ~\cite{DJ-L1992}. 
 The question of $\alpha$-weak mixing for Chacon's automorphism is a special case of a  general problem to tell which operators can be obtained as weak limits of powers of $\hat T$. For a recent application of weak limits of powers of the Koopman operator, see \cite{A-L-R2012}. Examples of transformations  with non-trivial explicit weak closure of powers  are given in~\cite{Ryzhikov2012}.

The purpose of the present paper is to completely describe the weak closure 
$$
\L \egdef \mbox{WCl}(\{\hat T^{-k}, k\in\ZZ\})
=\left\{\lim_{j\to\infty} \hat T^{-k_j} \mbox{ for a sequence of integers }(k_j)\right\}.
$$
of the powers of $\hat T$. Our main result, Theorem~\ref{Thm:weak limits}, states that $\L$ is reduced to $\Theta$ and an explicit family of polynomials in $\hat T$. Our result implies in particular that $T$ is not $\alpha$-weakly mixing for any $0<\alpha<1$. Note that partial results in the description of $\L$ have also been given by Ageev \cite{Ageev2003} who gave all polynomials in $\hat T$ of degree at most 1 in $\L$.

An essential ingredient in our description of $\L$ is the identification of particular weak limits, along the sequences $(mh_n)_{n\ge1}$, where $h_n$ is the height of the $n$-th tower in the cutting-and-stacking construction. As observed in~\cite{P-R2012}, these weak limits are given by a family of polynomials $(P_m(\hat T))$. In Section~\ref{sec:integral automorphism}, we give a definition of these polynomials $P_m$ based on the representation of $T$ as an integral automorphism over the 3-adic odometer (this representation was already used in \cite{P-R2000}). We provide inductive formulas for these polynomials in Section~\ref{sec:recurrence}. These formulas enable us to derive useful results about the asymptotic behavior of their coefficients (Section~\ref{sec:properties}). Then, by expanding the integers $(k_j)$ along the heights $(h_n)$, we prove that if the weak limit of $\hat T^{-k_j}$ is not $\Theta$, then it can be factorized by some polynomial $P_m(\hat T)$ (Proposition~\ref{prop:factorization}).

\section{Representation of Chacon's automorphism as integral automorphisms over the 3-adic odometer}
\label{sec:integral automorphism}

\subsection{Definition of the polynomials $\bm{P_m}$ in the 3-adic group}

Consider the compact group of 3-adic numbers
$$
\Gamma\egdef \ZZ_3=\Bigl\{x=(x_0, x_1, x_2, \dots), x_k\in\{0, 1, 2\}\Bigr\}.
$$
We denote by $\lambda$ the Haar measure on $\Gamma$: Under $\lambda$, the coordinates $(x_k)_k$ are i.i.d., uniformly distributed in $\{0, 1, 2\}$.

We introduce two $\lambda$-preserving transformations on $\Gamma$: 
\begin{itemize}
 \item The shift-map $\sigma:x=(x_0, x_1, \dots)\in \Gamma\mapsto \sigma x=(x_1, x_2, \dots)\in\Gamma$.
 \item The adding-machine transformation $S:x\in \Gamma\mapsto x+1 \in\Gamma$, where $1 \in \Gamma$ is identified with the sequence $(1,0,0,\dots)$. (In general, each integer $j$ is identified with an element of $\Gamma$, so that $S^jx=x+j$ for all $j\in\ZZ$ and all $x\in\Gamma$.)
\end{itemize}
We define the cocycle $\phi:\Gamma\setminus\{(2, 2, \dots)\}\rightarrow\ZZ$, where $\phi(x)$ is the first coordinate of $x$ which is different from 2:
$$
\phi(x)\egdef 
\begin{cases}
0& \mbox{if } x=2\dots 20*\\
1& \mbox{if } x=2\dots 21* .
\end{cases}
$$
We set $\phi^{(0)}(x)\egdef0$ and for $m\ge1$, 
$$
\phi^{(m)}(x)\egdef \phi(x)+\phi(Sx)+\dots +\phi(S^{m-1}x).
$$
Let us define $\pi_m$ as the probability distribution of $\phi^{(m)}$ on $\ZZ$: 
$\pi_m(j)\egdef \lambda(\phi^{(m)}=j)$,
and the polynomial $P_m$ by
$$
P_m(X)\egdef\EE_\lambda\left[X^{\phi^{(m)}}\right]=\sum_{j=0}^m \pi_m(j)X^j.
$$
Note that the degree of $P_m$ is strictly less than $m$ as soon as $m>2$.

\medskip

\subsection{Integral automorphisms over the 3-adic odometer}
We will make use of the following representations of Chacon's automorphism.
For each $n\ge0$, we define 
$$
X_n \egdef \left\{ (x,i): \ x\in\Gamma, 0\le i\le h_n-1+\phi(x)\right\}
$$
(see Figure~\ref{fig:Xn}).
\begin{figure}[ht]
  \centering
  \input{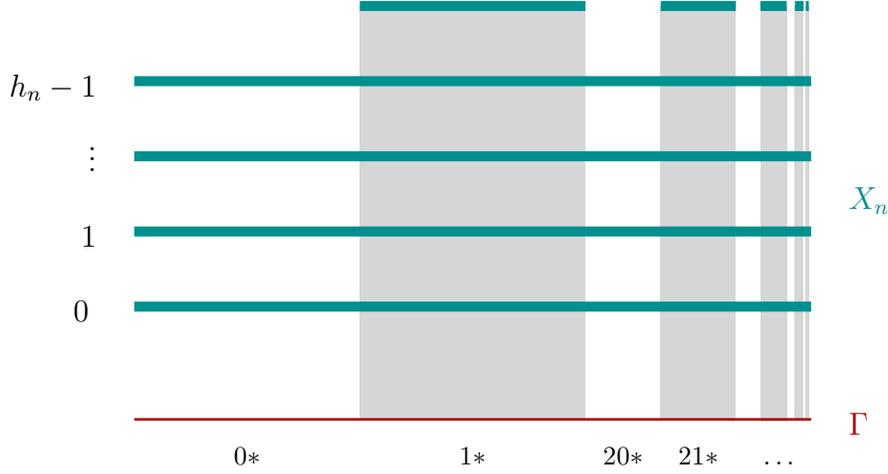}
  \caption{The space $X_n$}
  \label{fig:Xn}
\end{figure}

We consider the transformation $T_n$ of $X_n$, defined by 
$$
T_n(x,i)\egdef \begin{cases}
                 (x, i+1) & \mbox{ if } i+1 \le h_n-1 +\phi(x)\\
                 (Sx, 0) & \mbox{ if } i = h_n-1 +\phi(x).
               \end{cases}
$$
Let us introduce the map $\psi_n:X_n\rightarrow X_{n+1}$ defined by 
$$
\psi_n(x,i)\egdef (\sigma x, x_0h_n +i + \ind{x_0=2}).
$$
Observe that $\psi_n$ is bijective. 
Moreover, it conjugates the transformations $T_n$ and $T_{n+1}$. 
We consider the probability measure $\mu_n$ on $X_n$: For a fixed $i$ and a set $A\subset\{(x, i), x\in\Gamma\}$, 
$$
\mu_n(A)\egdef \frac1{h_n+1/2}\lambda\left(\{x\in\Gamma, (x,i)\in A\}\right).
$$
The transformation $T_n$ preserves $\mu_n$ and the map $\psi_n$ sends $\mu_n$ to $\mu_{n+1}$. It follows that all the measure-preserving dynamical systems $(X_n, T_n, \mu_n)$ are isomorphic. 

For $0\le i\le h_n-1$, we set 
$E_{n, i}\egdef\{(x,i): x\in\Gamma\}\subset X_n$. We have $E_{n, i}=T_n^iE_{n, 0}$, hence 
$$
\left\{E_{n, 0}, \dots, E_{n, h_n-1}\right\}
$$
is a Rokhlin tower of height $h_n$ for $T_n$. 
Moreover, for any $n\ge0$, and any $0\le i\le h_n-1$, 
$$
\psi_n(E_{n,i})= E_{n+1,i}\sqcup E_{n+1, h_n+i}\sqcup E_{n+1, 2h_n+i+1}.
$$
Fix $n_0$. By composition of the isomorphisms $(\psi_n)$, we can view all these Rokhlin towers inside $X_{n_0}$. The above formula shows that the towers are embedded in the same way as the towers of Chacon's automorphism. 
Therefore, $(X_{n_0}, \mu_{n_0}, T_{n_0})$ is isomorphic to $(X, \mu, T)$.

\subsection{Weak limits along subsequences $\bm{(m h_n)_{n\ge1}}$}
\begin{lemma}
 \label{Lemma: uniformite dans la tour}
Let $m\ge1$ and $u\ge0$ be fixed integers. Then 
$$
\sup_{A, B} \left| \mu\left(T^{mh_n+u}B\cap A \right) - \sum_{i\in\ZZ} \pi_m(i+u)\,\mu\left(T^{-i}B\cap A\right) \right| \tend{n}{\infty} 0.
$$
where the supremum is taken over any sets $A$ and $B$ which are union of levels of Tower $n$.
\end{lemma}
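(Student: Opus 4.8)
The plan is to carry out every computation inside the integral-automorphism model $(X_n,T_n,\mu_n)$, which is isomorphic to $(X,T,\mu)$ and in which the sets $A,B$ are unions of levels: write $A=\bigcup_{i\in I_A}E_{n,i}$ and $B=\bigcup_{i\in I_B}E_{n,i}$ with $I_A,I_B\subseteq\{0,\dots,h_n-1\}$. Setting $r_n(x)\egdef h_n+\phi(x)$ for the height of the column of $X_n$ over $x\in\Gamma$, the map $T_n$ is the tower built over the odometer $(\Gamma,S,\lambda)$ with roof $r_n$. The first step is the fundamental orbit formula: climbing from a base point $(x,i)$ through $m$ successive columns costs $\sum_{j=0}^{m-1}r_n(S^jx)=mh_n+\phi^{(m)}(x)$ steps, so that
$$
T_n^{\,mh_n+u}(x,i)=\bigl(S^mx,\;u+i-\phi^{(m)}(x)\bigr),
$$
valid exactly when $0\le u+i-\phi^{(m)}(x)\le r_n(S^mx)-1$. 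Since $0\le\phi^{(m)}\le m$ and $m,u$ are fixed, this identity holds off a boundary set $\mathcal{B}_n$, contained in the bottom $m$ and top $u{+}1$ levels of Tower $n$, of measure $O\bigl((m+u)/h_n\bigr)$ — crucially, a bound independent of $A$ and $B$.

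The decisive observation is that membership of a point in $A$ depends only on its height coordinate and not at all on its $\Gamma$-coordinate, because $A$ is a union of \emph{full} levels (each equal to all of $\Gamma$ at a fixed height). Hence the shift $x\mapsto S^mx$ produced by $T_n^{\,mh_n+u}$ is invisible to $A$, and on the good set $T_n^{\,mh_n+u}(x,i)\in A\iff u+i-\phi^{(m)}(x)\in I_A$. Using $\mu(T^{mh_n+u}B\cap A)=\mu_n(\{q\in B:T_n^{\,mh_n+u}q\in A\})$ and integrating over each level $E_{n,i}\subset B$ — on which the height is constant while $x$ is distributed like $\lambda$, so $\int_{E_{n,i}}\ind{\phi^{(m)}(x)=v}\,d\mu_n=\tfrac{1}{h_n+1/2}\pi_m(v)$ — I would obtain
$$
\mu\bigl(T^{mh_n+u}B\cap A\bigr)=\frac{1}{h_n+1/2}\sum_{v\ge0}\pi_m(v)\sum_{i\in I_B}\ind{\,u+i-v\in I_A\,}+O\!\left(\frac{m+u}{h_n}\right).
$$
Thus the enormous power $T_n^{\,mh_n+u}$ is replaced, on the levels, by a $\pi_m$-weighted average of the bounded powers $T^{-(v-u)}$, which is exactly what the statement predicts.

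It then remains to expand $\sum_i\pi_m(i+u)\,\mu(T^{-i}B\cap A)$ in the same model and check that it produces the identical main term. For $i$ in the bounded range where $\pi_m(i+u)\ne0$ (namely $-u\le i\le m-u$), the level $T_n^{-i}E_{n,j}$ coincides with $E_{n,j-i}$ off $O(m+u)$ extremal levels, so $\mu(T^{-i}B\cap A)=\tfrac{1}{h_n+1/2}\#\{j\in I_B:j-i\in I_A\}+O((m+u)/h_n)$; the substitution $v=i+u$ then matches the display above term by term. The main obstacle — and the precise reason the estimate is \emph{uniform} over all $A,B$ — is the error control: a naive level-by-level bound would accumulate over the $O(h_n^2)$ pairs of levels and diverge. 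I would avoid this by routing every discrepancy through a single $A,B$-independent family of sets, namely $\mathcal{B}_n$ together with the $O(m+u)$ extremal levels entering the finitely many translations $T^{-i}$, each of measure $O((m+u)/h_n)$. Since $T$ preserves $\mu$, replacing $B$ by $B\setminus\mathcal{B}_n$ alters either side by at most $\mu(\mathcal{B}_n)$, so the total error is $O_{m,u}(1/h_n)\to0$ as $n\to\infty$, uniformly in $A$ and $B$.
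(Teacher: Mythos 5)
Your proof is correct and takes essentially the same route as the paper: both work in the integral-automorphism model, use the orbit identity $T_n^{mh_n}(x,j)=(S^m x,\,j-\phi^{(m)}(x))$ together with the fact that levels are full $\Gamma$-fibers (so the base shift $S^m$ is invisible to unions of levels), identify the main term via $\lambda(\phi^{(m)}=v)=\pi_m(v)$, and obtain uniformity by confining all errors to $O(m+u)$ extremal levels of measure $O((m+u)/h_n)$, independently of $A$ and $B$. The only (cosmetic) difference is that you fold the shift $u$ directly into the orbit formula, whereas the paper first proves the case $u=0$ exactly on levels $j\ge m$ and then applies that result to the part of $T^uB$ remaining in the tower, incurring an extra error of order $|u|\,\mu(E_{n,0})$.
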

\begin{proof}
We may identify $(X, \mu, T)$ with $(X_n,\mu_n, T_n)$ and the level $j$ of Tower $n$ with 
$E_{n, j}=\{(x,j): x\in\Gamma\}\subset X_n$. 

Assume that $A=E_{n, k}$ for $0\le k\le h_n-1$ and $B=E_{n, j}$ for $m\le j\le h_n-1$ . 
We have 
$$
T^{mh_n}B=T^{mh_n}E_{n, j} = \left\{ (S^mx, j-\phi^{(m)}(x)), x\in\Gamma\right\}.
$$
Then, viewed in $X_n$,
$$
T^{mh_n}B\cap A = S^m\left(\left\{x\in\Gamma:\ \phi^{(m)}(x) = j-k\right\}\right)\times\{k\},
$$
and
$$
\mu\left(T^{mh_n}B\cap A \right) = \lambda \left( \phi^{(m)}=j-k\right)\mu(E_{n, k})
= \pi_m(j-k)\mu(E_{n, k}).
$$
Moreover, $\mu\left(T^{-i}B\cap A\right) = \mu(E_{n, k})$ if $k=j-i$, and zero otherwise. 
We obtain that 
$$
\mu\left(T^{mh_n}B\cap A \right) = \sum_{i\in\ZZ} \pi_m(i)\,\mu\left(T^{-i}B\cap A\right).
$$
By additivity of the measure $\mu$, the above equality remains true if $A$ is a union of levels of Tower $n$, and if $B$ is a union of levels $j\ge m$ of Tower $n$.
Finally, removing the restriction on the levels in $B$, we have 
$$
\left|\mu\left(T^{mh_n}B\cap A \right) - \sum_{i\in\ZZ} \pi_m(i)\,\mu\left(T^{-i}B\cap A\right)\right|
\le 2m\, \mu(E_{n, 0})\tend{n}{\infty}0.
$$
This proves the lemma for $u=0$. For an arbitrary $u$, we apply this result to the part of $T^uB$ which remains in the $n$-th tower. We get an extra error term of order $|u|\,\mu(E_{n, 0})$.
\end{proof}

As a direct consequence of Lemma~\ref{Lemma: uniformite dans la tour}, we recover the result from~\cite{P-R2012}:
\begin{theo}
For any $m\ge1$, we have the weak convergence
 \begin{equation}
  \lim_{n\to\infty} \hat T^{-mh_n} = P_m(\hat T).
\end{equation}
\end{theo}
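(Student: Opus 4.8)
The plan is to unwind the definition of weak operator convergence and reduce it, through a density and uniform-boundedness argument, to the statement already contained in Lemma~\ref{Lemma: uniformite dans la tour}. Recall that $\hat T^{-mh_n}\to P_m(\hat T)$ weakly means that $\langle \hat T^{-mh_n}f, g\rangle \to \langle P_m(\hat T)f, g\rangle$ for every $f,g\in L^2(X,\mu)$. Since each $\hat T^{-mh_n}$ is unitary, hence of norm $1$, while $P_m(\hat T)$ is a finite combination of unitaries (as $\pi_m$ is supported on $\{0,\dots,m\}$) and so bounded, a routine three-$\eps$ argument shows that it suffices to verify this convergence for $f$ and $g$ ranging over a total family, i.e.\ a family whose linear span is dense in $L^2(X,\mu)$.

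First I would take as total family the indicators $\ind{E_{n_0, j}}$ of the levels of the towers, over all $n_0\ge0$ and all $0\le j\le h_{n_0}-1$. Because Chacon's automorphism is of rank one, these tower partitions generate $\A$ modulo $\mu$, so the simple functions measurable with respect to some Tower $n_0$ form a dense subspace of $L^2(X,\mu)$; thus the chosen family is total. By bilinearity it is then enough to treat $f=\ind B$ and $g=\ind A$, where $A$ and $B$ are finite unions of levels of a single Tower $n_0$.

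Next I would observe that for every $n\ge n_0$ the sets $A$ and $B$ are still unions of levels of Tower $n$, since each level of Tower $n_0$ subdivides into levels of Tower $n$. Unwinding the inner products gives $\langle \hat T^{-mh_n}\ind B, \ind A\rangle = \mu(T^{mh_n}B\cap A)$ and $\langle P_m(\hat T)\ind B, \ind A\rangle = \sum_{j} \pi_m(j)\,\mu(T^{-j}B\cap A)$. Applying Lemma~\ref{Lemma: uniformite dans la tour} with $u=0$ to these particular $A$ and $B$ yields exactly $\mu(T^{mh_n}B\cap A)\to \sum_i \pi_m(i)\,\mu(T^{-i}B\cap A)$ as $n\to\infty$, which is the desired convergence of inner products on indicators.

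The conclusion then follows by linearity over the total family and the three-$\eps$ passage to all of $L^2(X,\mu)$. There is no genuine obstacle here: all the analytic content sits in Lemma~\ref{Lemma: uniformite dans la tour}, and the only points requiring a little care are the uniform (norm-$1$) boundedness that legitimizes the reduction to a dense family, and the remark that fixing $A,B$ as unions of levels of Tower $n_0$ keeps them admissible test sets in the lemma for all $n\ge n_0$, so that the lemma's vanishing supremum controls our fixed pair along the whole tail of the sequence.
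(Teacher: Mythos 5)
Your proposal is correct and matches the paper's approach: the paper states this theorem as a direct consequence of Lemma~\ref{Lemma: uniformite dans la tour}, and your argument simply makes explicit the routine reduction (uniform norm bounds plus totality of the tower-level indicators, which refine consistently since each level of Tower $n$ splits into levels of Tower $n+1$) that the paper leaves implicit. All the analytic content indeed resides in the lemma, applied with $u=0$.
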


\section{Recurrence formulas for $P_m$}

\label{sec:recurrence}

\subsection{Description of the sequence $\bm{(\phi(S^{j}x))_{j\in\ZZ}$}}
\label{Sec:description of the cocycle}

Let $x\in\Gamma\setminus\{(2, 2, \dots)\}$. We say that $\order (x)=k\ge0$ if $x_0=\dots=x_{k-1}=2$ and $x_{k}\not=2$.

Since the first digit in the sequence $(\dots, x-1, x, x+1, \dots )$ follows a periodic pattern $\dots 012012012\dots$, the contribution of points of order $0$ in the sequence $(\phi(S^{j}x))_{j\in\ZZ}$ provides a periodic sequence of blocks $01$ separated by one symbol given by a point of higher order (see Figure~\ref{fig:structure}).
To fill in the missing symbols corresponding to positions $j$ such that $\order(x+j)\ge1$, we observe that, if $x$ starts with a $2$, then for all $j\in\ZZ$, 
$$
\phi(x+3j)=\phi(\sigma x+j).
$$
Hence the missing symbols are given by the sequence $(\phi(S^{j}\sigma x))_{j\in\ZZ}$.

\begin{figure}[h]
  \begin{center}
\begin{tabular}{lllllllllllllllll}
0 & 1 & . & 0 & 1 & . & 0 & 1 & . & 0 & 1 & . & 0 & 1 & . & 0 & $\leftarrow$ contribution of order 0\\
  &   & 0 &   &   & 1 &   &   & . &   &   & 0 &   &   & 1 &   & $\leftarrow$  contribution of order 1\\
  &   &   &   &   &   &   &   & 0 &   &   &   &   &   &   &   & $\leftarrow$  higher orders \\
0 & 1 & 0 & 0 & 1 & 1 & 0 & 1 & 0 & 0 & 1 & 0 & 0 & 1 & 1 & 0 & $\leftarrow$  the whole sequence
\end{tabular}
\end{center}
\caption{Structure of the sequence  $(\phi(S^{j} x))_{j\in\ZZ}$.}
\label{fig:structure}
\end{figure}

\begin{lemma}
  \label{lemma:distribution 0 and 1}
  The probability distributions of the random sequences $(\phi(S^{j}x))_{j\in\ZZ}$ and $(1-\phi(S^{-j} x))_{j\in\ZZ}$ are the same.
\end{lemma}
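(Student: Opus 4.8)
The plan is to reformulate the statement at the level of laws of processes and to exploit the self-similar structure described just before the lemma. Write $Y(x)\egdef(\phi(S^jx))_{j\in\ZZ}\in\{0,1\}^{\ZZ}$ and let $\nu$ be its law under $\lambda$. Introduce the flip-reverse involution $\Psi$ on $\{0,1\}^{\ZZ}$ defined by $(\Psi a)_j\egdef 1-a_{-j}$; since $(1-\phi(S^{-j}x))_{j}=\Psi(Y(x))$, the lemma is exactly the assertion that $\Psi_*\nu=\nu$.

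First I would record the renewal/self-similar decomposition of $\nu$. The first digit $x_0$, which is uniform and independent of $\sigma x$, fixes the phase: setting $g\egdef(2-x_0)\bmod 3$, the value $Y(x)_j$ equals $0$ when $j\equiv g+1$ and equals $1$ when $j\equiv g+2\pmod 3$, while the gap positions $j\equiv g\pmod 3$ carry, by the identity $\phi(x+3j')=\phi(\sigma x+j')$, the values of an independent copy $Y(\sigma x)\sim\nu$; precisely $Y(x)_{g+3j'}=Y(\sigma x)_{j'}$. Thus $\nu$ solves the fixed-point equation ``uniform phase $g$, independent gap sub-process distributed as $\nu$''.

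The key computation is then to apply $\Psi$ to this description and to check that it returns a solution of the same fixed-point equation. Reversing and flipping sends the gap class $g$ to $g'\egdef(-g)\bmod 3$, again uniform; on the order-$0$ layer one verifies that $\Psi$ turns the deterministic pattern into the one associated with phase $\tau(x_0)$, where $\tau$ swaps $0,1$ and fixes $2$, which is still uniform; and on the gap layer the reversal and the flip combine so that the sub-process of $\Psi Y(x)$ read along the gap positions is exactly $\Psi Y(\sigma x)$, an independent copy distributed as $\Psi_*\nu$. Hence $\Psi_*\nu$ satisfies the very same fixed-point equation as $\nu$.

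It remains to argue that this fixed-point equation has a unique solution, whence $\Psi_*\nu=\nu$ at once. For this I would fix a window $[-M,M]$ and unfold the recursion: the non-gap values on the window are determined by $g$, the gap positions correspond to a contiguous sub-window of the next-level process of size at most $(M+2)/3$, and iterating $O(\log M)$ times reduces every coordinate on $[-M,M]$ to a measurable function of finitely many independent uniform phases. This forces the finite-dimensional distributions of any solution, giving uniqueness. The main obstacle is precisely this last step: one must track the contiguous index windows through the $3$-to-$1$ reduction to be sure the recursion terminates and that the joint law on $[-M,M]$ is genuinely determined. By contrast, the naive attempt to realize $\Psi$ by a single explicit $\lambda$-preserving reflection $x\mapsto c-x$ of $\Gamma$ fails, because the $3$-adic carries prevent $\phi(c-x)=1-\phi(x)$ from holding pointwise.
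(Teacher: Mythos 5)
Your strategy is the same as the paper's: the paper's proof is a one\-/line remark that the lemma follows from the layered description of $(\phi(S^jx))_{j\in\ZZ}$ given in Section~\ref{Sec:description of the cocycle} (a periodic order-$0$ pattern whose phase is determined by $x_0$, with gaps filled by the same process run on $\sigma x$) together with the fact that $\sigma$ preserves $\lambda$. You have simply made the implicit self-similarity rigorous as a distributional fixed-point equation plus a uniqueness argument, which is a legitimate formalization. On the uniqueness step, note that the unfolding does not reduce the window to phases alone after a deterministic $O(\log M)$ number of iterations: at every level one residual gap coordinate survives with probability $1/3$, so each coordinate $\phi(x+j)$ is a measurable function of finitely many digits only \emph{almost surely} (equivalently, $\order(x+j)$ is a.s.\ finite), with geometric tail. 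That is enough to pin down the finite-dimensional distributions of any solution, so the obstacle you flag is real but surmountable exactly as you suggest.

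One concrete slip in your verification needs repair: the gap sub-process of $\Psi Y(x)$ is \emph{not} exactly $\Psi Y(\sigma x)$. With $g=(2-x_0)\bmod 3$ and $g'=(-g)\bmod 3$, writing $-(g'+3j')=g+3(-j'-\epsilon)$ forces $\epsilon=\ind{g\neq 0}$, so $(\Psi Y(x))_{g'+3j'}=(\Psi Y(\sigma x))_{j'+\epsilon}$: for $x_0\in\{0,1\}$ the gap layer is the \emph{unit shift} of $\Psi Y(\sigma x)$. Since $\epsilon$ is a function of the phase, the conditional law of the gap layer given $g'$ depends on $g'$, and your fixed-point equation fails as literally stated. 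The cure is one line: the law $\nu$ of $(\phi(S^jx))_{j\in\ZZ}$ is shift-invariant because $S$ preserves $\lambda$, hence $\Psi_*\nu$ is shift-invariant as well ($\Psi$ conjugates the shift to its inverse), and the phase-dependent shift becomes invisible in law; alternatively, run your uniqueness unfolding for the slightly larger class of equations allowing a phase-dependent shift, which it handles verbatim. With that addition the argument is complete, and your closing remark is also correct: no single reflection $x\mapsto c-x$ of $\Gamma$ realizes the flip pointwise (the $3$-adic carries obstruct it), so the distributional route is the right one.
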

\begin{proof}
  This is an easy consequence of the above construction of the sequence $(\phi(S^{j}x))_{j\in\ZZ}$ and 
  the fact that $\sigma$ preserves the measure~$\lambda$.
\end{proof}

\begin{lemma}
\label{lemma:symmetry of the polynomials}
The coefficients of the polynomial $P_m$ are symmetrical: For all $0\le j\le m$,
$\pi_m(j) = \pi_m(m-j)$.
\end{lemma}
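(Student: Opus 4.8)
The plan is to recast the desired symmetry as a distributional statement. The identity $\pi_m(j) = \pi_m(m-j)$ for all $0\le j\le m$ is equivalent to saying that the random variables $\phi^{(m)}$ and $m-\phi^{(m)}$ have the same law under $\lambda$; in terms of the polynomial this is the self-reciprocity $X^m P_m(1/X) = P_m(X)$. So I would aim to prove the distributional identity that $\phi^{(m)}$ and $m-\phi^{(m)}$ are equal in law, and for this the natural tool is Lemma~\ref{lemma:distribution 0 and 1}.

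First I would write $\phi^{(m)}(x) = \sum_{j=0}^{m-1}\phi(S^j x)$ as a function of the finite block $(\phi(x), \phi(Sx), \dots, \phi(S^{m-1}x))$ extracted from the bi-infinite sequence $(\phi(S^j x))_{j\in\ZZ}$. Applying Lemma~\ref{lemma:distribution 0 and 1}, this block has the same distribution as the block $(1-\phi(x), 1-\phi(S^{-1}x), \dots, 1-\phi(S^{-(m-1)}x))$. Summing the $m$ coordinates, I obtain that $\phi^{(m)}$ has the same law as $m - \sum_{j=0}^{m-1}\phi(S^{-j}x)$.

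It then remains to identify the backward sum $\sum_{j=0}^{m-1}\phi(S^{-j}x)$ with $\phi^{(m)}$ in distribution. Since $\lambda$ is $S$-invariant, the sequence $(\phi(S^j x))_{j\in\ZZ}$ is stationary, so the sum over the indices $\{0, -1, \dots, -(m-1)\}$ has the same law as the sum over the shifted window $\{0, 1, \dots, m-1\}$, which is exactly $\phi^{(m)}$. Combining the two steps gives that $\phi^{(m)}$ and $m-\phi^{(m)}$ are equal in law, and hence $\pi_m(j) = \pi_m(m-j)$.

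The only delicate point is the index bookkeeping: applying Lemma~\ref{lemma:distribution 0 and 1} reverses the orientation of the window and replaces it by the indices $0, -1, \dots, -(m-1)$, and one must recognize that by stationarity this backward window carries the same sum-distribution as the forward window $0, 1, \dots, m-1$. Everything else is immediate once this realignment is in place.
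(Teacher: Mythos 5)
Your proof is correct and follows essentially the same route as the paper: both reduce the symmetry $\pi_m(j)=\pi_m(m-j)$ to the complement--reversal identity of Lemma~\ref{lemma:distribution 0 and 1}, the paper phrasing it as counting digits equal to $0$ versus $1$ among $m$ consecutive terms. Your version is merely more explicit about the one point the paper leaves implicit --- that the reversed window $\{0,-1,\dots,-(m-1)\}$ has the same sum-distribution as $\{0,\dots,m-1\}$ by $S$-invariance of $\lambda$ --- and that bookkeeping is handled correctly.
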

\begin{proof}
The coefficient $\pi_m(m-j)$ is equal to the probability to see $(m-j)$ digits equal to $1$ when looking at $m$ consecutive terms of the sequence $(\phi(S^{j}x))_{j\in\ZZ}$. 
Thus, $\pi_m(m-j)$ is also equal to the probability to see $j$ digits equal to $0$ when looking at $m$ consecutive terms. Using Lemma~\ref{lemma:distribution 0 and 1}, we conclude that $\pi_m(m-j)=\pi_m(j)$.
\end{proof}

\subsection{Recurrence formulas for $\bm{P_m}$}

\begin{theo}
\label{Th:recurrence for P_m}
For all $m\ge0$, 
 \begin{align*}
  P_{3m} (X) &=X^m P_m (X)  ; \\
  P_{3m+1} (X) &=\dfrac{1}{3}X^m\Bigl((1+X)P_m  (X) + P_{m+1} (X) \Bigr) ;\\
  P_{3m+2} (X) &=\dfrac{1}{3}X^m\Bigl(XP_m (X)  + (1+X)P_{m+1} (X) \Bigr). 
\end{align*}
\end{theo}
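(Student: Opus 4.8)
The plan is to compute the generating function $P_M(X) = \EE_\lambda[X^{\phi^{(M)}}]$ directly from its probabilistic definition, by decomposing the window sum $\phi^{(M)}(x) = \sum_{j=0}^{M-1}\phi(S^j x)$ according to the first digit of $x+j$, exactly as in the description of the sequence $(\phi(S^j x))_{j\in\ZZ}$ recalled in Section~\ref{Sec:description of the cocycle}. Since the first coordinate of $x+j$ runs through the period-$3$ pattern $\dots012012\dots$ as $j$ increases, each position $j$ is of one of three types: if $(x+j)_0 = 0$ then $\phi(S^j x) = 0$; if $(x+j)_0 = 1$ then $\phi(S^j x) = 1$; and if $(x+j)_0 = 2$, that is $\order(x+j)\ge1$, then, writing such a position as $y+3k$ with $y\egdef x+(2-x_0)$ (which starts with a $2$ and satisfies $\sigma y = \sigma x$), the relation $\phi(y+3k) = \phi(\sigma y+k)$ from Section~\ref{Sec:description of the cocycle} shows that these positions reproduce the values $\phi(\sigma x+k)$, with $k$ ranging over a consecutive block of integers starting at $0$.

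First I would settle the residue bookkeeping. Over the window $\{0,1,\dots,M-1\}$ the number of $j$ in each residue class modulo $3$ is fixed by $M$ alone, whereas the correspondence between residue classes and first digits of $x+j$ is a cyclic shift governed by $x_0$. I would therefore condition on $x_0\in\{0,1,2\}$ and, in each case and for each of the three values $M = 3m,\,3m+1,\,3m+2$, simply count two quantities: the number of positions with $\phi = 1$ (which produces a factor of $X$ raised to that number), and the number of positions of order $\ge1$, checking that the corresponding indices $k$ fill the block $\{0,\dots,m-1\}$ or $\{0,\dots,m\}$ so that their joint contribution is exactly $\phi^{(m)}(\sigma x)$ or $\phi^{(m+1)}(\sigma x)$. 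For example, when $M = 3m$ one finds, for every value of $x_0$, exactly $m$ positions with $\phi = 1$ and $m$ higher-order positions forming the block $k = 0,\dots,m-1$, whence $\phi^{(3m)}(x) = m + \phi^{(m)}(\sigma x)$ identically.

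The decisive structural input is that, conditionally on $x_0$, the law of $\sigma x$ under $\lambda$ is again $\lambda$, because the coordinates $(x_k)$ are i.i.d. Consequently each conditional expectation factorizes as $\EE_\lambda[X^{\phi^{(M)}}\mid x_0] = X^{c(x_0)}\,\EE_\lambda[X^{\phi^{(m')}(\sigma x)}\mid x_0] = X^{c(x_0)}P_{m'}(X)$, where $c(x_0)$ is the number of $1$'s and $m'\in\{m,m+1\}$ is the higher-order block length found above. Averaging over the three equally likely values of $x_0$ with weight $1/3$ then assembles the three stated identities; the case $M = 3m$ degenerates since neither $c$ nor $m'$ depends on $x_0$, giving $P_{3m}(X) = X^m P_m(X)$ directly.

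I expect the main difficulty to be combinatorial rather than conceptual: keeping the counts of $1$'s and the higher-order block lengths correct across all nine subcases (three residues of $M$ modulo $3$, three values of $x_0$), and in particular verifying that the higher-order positions always correspond to a block of $k$'s starting at $0$, so that one genuinely recovers $\phi^{(m')}(\sigma x)$ rather than a shifted partial sum. Lemma~\ref{lemma:symmetry of the polynomials} offers a convenient consistency check on the resulting formulas, and the identities can be further tested by comparing degrees and evaluating at $X = 1$, where $P_M(1) = 1$ for every $M$.
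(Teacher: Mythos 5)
Your proposal is correct and follows essentially the same route as the paper: both rest on the periodic pattern of first digits, the relation $\phi(x+3j)=\phi(\sigma x+j)$ for $x$ starting with $2$, conditioning on $x_0$ (using that the conditional law of $\sigma x$ is again $\lambda$), and averaging with weight $1/3$, yielding the same pointwise identities $\phi^{(3m)}(x)=m+\phi^{(m)}(\sigma x)$ and their analogues for $3m+1$, $3m+2$. The only difference is organizational: the paper first proves the $3m$ case and then reduces the other two to it via $\phi^{(3m+1)}(x)=\phi(x)+\phi^{(3m)}(Sx)$, whereas you count all nine subcases directly, which is equivalent bookkeeping.
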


\begin{proof}
Let $x\in\Gamma\setminus\{(2, 2, \dots)\}$. In the computation of $\phi^{(3m)}(x)$, the contribution of the $2m$ points $x+j$ ($0\le j\le 3m-1$) of order 0 is always $m$.
Because of the structure of the sequence $\left(\phi(S^jx)\right)_{j\in\ZZ}$ described in section~\ref{Sec:description of the cocycle}, the contribution of the other $m$ points is $\phi^{(m)}(\sigma x)$. Hence, 
\begin{equation}
\label{Eq:phi0(3m)}
 \phi^{(3m)}(x) = m+\phi^{(m)}(\sigma x),
\end{equation}
which proves that $P_{3m} =X^m P_m$.

Let us compute $\phi^{(3m+1)}(x)$:
If $x_0=0$, then $\phi^{(3m+1)}(x) = \phi^{(3m)}(x+1)$, which is equal to 
$m+\phi^{(m)}(\sigma (x+1))$ by~\eqref{Eq:phi0(3m)}. 
Hence, $\phi^{(3m+1)}(x) = m+\phi^{(m)}(\sigma x)$.
If $x_0=1$, then $\phi^{(3m+1)}(x) = 1+\phi^{(3m)}(x+1)$. Hence,
$$
\phi^{(3m+1)}(x) =1+m+\phi^{(m)}(\sigma (x+1))=1+m+\phi^{(m)}(\sigma x).
$$
If $x_0=2$, then $\phi^{(3m+1)}(x) = \phi(x) +\phi^{(3m)}(x+1)$. Since $\phi(x)=\phi(\sigma x)$, we get 
$$
\phi^{(3m+1)}(x) = \phi(\sigma x) + m+\phi^{(m)}(\sigma x+1)=m+\phi^{(m+1)}(\sigma x).
$$
\begin{equation}
\label{Eq:phi0(3m+1)}
\phi^{(3m+1)}(x) = 
\begin{cases}
m+ \phi^{(m)}(x)& \mbox{if } x_0=0,\\
1+m+ \phi^{(m)}(\sigma x) & \mbox{if } x_0=1,\\
m+\phi^{(m+1)}(\sigma x)  & \mbox{if } x_0=2.
\end{cases}
\end{equation}
Since each digit appears with probability $1/3$ in first position, and since the distribution of $\sigma x$ conditioned on the first digit is $\lambda$, we get 
$$
P_{3m+1} (X) = \frac1{3}\left( \EE_\lambda\left[X^{m+\phi^{(m)}}\right] + \EE_\lambda\left[X^{m+1+\phi^{(m)}}\right]+ \EE_\lambda\left[X^{m+\phi^{(m+1)}}\right]\right).
$$
This yields the desired formula for $P_{3m+1}$.

In the same way, we compute $\phi^{(3m+2)}(x)$:
\begin{equation}
\label{Eq:phi0(3m+2)}
\phi^{(3m+2)}(x) = 
\begin{cases}
1+m+ \phi^{(m)}(\sigma x)& \mbox{if } x_0=0,\\
1+m+ \phi^{(m+1)}(\sigma x) & \mbox{if } x_0=1,\\
m+\phi^{(m+1)}(\sigma x)  & \mbox{if } x_0=2,
\end{cases}
\end{equation}
and we obtain the recurrence formula for $P_{3m+2}$.
\end{proof}

\subsection{Recurrence formulas for reduced polynomials}
For $m\ge0$, let $\ell(m)$ be the highest power of $X$ dividing $P_m$, so that 
$$ P_m (X)= X^{\ell(m)} \tilde P_m(X), $$
where $\tilde P_m$ is the reduced polynomial of order $m$, $\tilde P_m(0)\neq 0$.

Observe that $\ell(m)$ is the minimum value taken by the cocycle $\phi^{(m)}$. Hence it is easy to see that $m\mapsto\ell(m)$ is non-decreasing, and that 
$$s_m\egdef\ell(m+1)-\ell(m)\in\{0,1\}.$$
Moreover, by Lemma~\ref{lemma:symmetry of the polynomials}, 
\begin{equation}
\label{Eq:relation degree}
\ell(m)+\mbox{deg}(P_m)=m.
\end{equation}
Thanks to the recurrence formulas for $P_m$ (see Theorem~\ref{Th:recurrence for P_m}), we deduce recurrence formulas for $\ell(m)$: 
\begin{align}
\label{Eq:recurrence for l(m)}
  \ell(3m)  &= m+ \ell(m) , \notag\\
  \ell(3m+1)  &= m+ \ell(m) , \\
  \ell(3m+2)  &= m+ \ell(m+1).\notag
\end{align}
We also get recurrence formulas for  the reduced polynomials:

\begin{prop}
\label{Prop:recurrence for the reduced polynomials}
Let $m\ge0$. then
\begin{align*}
\tilde P_{3m}(X) & =\tilde P_{m}(X) ; \\
3\tilde P_{3m+1}(X) & =(1+X)\tilde P_{m}(X) + X^{s_m}\tilde P_{m+1}(X) ; \\
3\tilde P_{3m+2}(X) & =X^{1-s_m}\tilde P_{m}(X) + (1+X)\tilde P_{m+1}(X), 
\end{align*}
\end{prop}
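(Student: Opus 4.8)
The plan is to substitute the defining factorization $P_k(X)=X^{\ell(k)}\tilde P_k(X)$ into the three identities of Theorem~\ref{Th:recurrence for P_m}, then use the recurrence formulas~\eqref{Eq:recurrence for l(m)} for $\ell$ to pull out, on each side, exactly the power $X^{\ell(3m+r)}$ predicted for $P_{3m+r}$. Whatever remains after dividing by this power will be the asserted expression for $\tilde P_{3m+r}$, once we confirm it is genuinely reduced. Throughout I write $s_m=\ell(m+1)-\ell(m)\in\{0,1\}$, so that $P_{m+1}(X)=X^{\ell(m)}\,X^{s_m}\tilde P_{m+1}(X)$.

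The case $3m$ is immediate: substituting into $P_{3m}=X^mP_m$ gives $P_{3m}(X)=X^{m+\ell(m)}\tilde P_m(X)=X^{\ell(3m)}\tilde P_m(X)$ by the first line of~\eqref{Eq:recurrence for l(m)}, hence $\tilde P_{3m}=\tilde P_m$. For $3m+1$, I substitute into $P_{3m+1}=\tfrac13X^m\bigl((1+X)P_m+P_{m+1}\bigr)$ and factor $X^{\ell(m)}$ out of the bracket, which becomes $X^{\ell(m)}\bigl((1+X)\tilde P_m(X)+X^{s_m}\tilde P_{m+1}(X)\bigr)$; since $\ell(3m+1)=m+\ell(m)$, dividing by $X^{\ell(3m+1)}$ produces the second formula. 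For $3m+2$ the natural common factor is $X^{\ell(m+1)}$, because $\ell(3m+2)=m+\ell(m+1)$: writing $XP_m(X)=X^{\ell(m+1)}\,X^{1-s_m}\tilde P_m(X)$ and $(1+X)P_{m+1}(X)=X^{\ell(m+1)}(1+X)\tilde P_{m+1}(X)$ and substituting into $P_{3m+2}=\tfrac13X^m\bigl(XP_m+(1+X)P_{m+1}\bigr)$ yields $P_{3m+2}(X)=\tfrac13X^{\ell(3m+2)}\bigl(X^{1-s_m}\tilde P_m(X)+(1+X)\tilde P_{m+1}(X)\bigr)$, which is the third formula.

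The one point requiring genuine verification, and the place where the proof could secretly fail, is that each candidate right-hand side is really a \emph{reduced} polynomial, i.e.\ has nonzero constant term; only then is $\ell(3m+r)$ truly the full power of $X$ dividing $P_{3m+r}$ and the factorization legitimate. This is exactly where the case split $s_m\in\{0,1\}$ enters. Evaluating the bracket at $X=0$: whenever the exponent on the $X^{s_m}$ (resp.\ $X^{1-s_m}$) term is positive, that term vanishes and the surviving constant coefficient is $\tilde P_m(0)$ or $\tilde P_{m+1}(0)$; when the exponent is $0$, both constant terms survive and are added. Since all coefficients $\pi_m(j)$ are nonnegative (they are probabilities) and $\tilde P_m(0),\tilde P_{m+1}(0)>0$ by definition of the reduced polynomial, no cancellation is possible and the constant term is strictly positive in every case. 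This confirms that the exponents factored out are correct and that the three identities do express the reduced polynomials, completing the proof.
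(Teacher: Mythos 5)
Your proof is correct and follows exactly the route the paper intends (the paper states Proposition~\ref{Prop:recurrence for the reduced polynomials} without explicit proof, as an immediate consequence of Theorem~\ref{Th:recurrence for P_m} and the recurrences~\eqref{Eq:recurrence for l(m)} via the substitution $P_k(X)=X^{\ell(k)}\tilde P_k(X)$). Your added verification that each right-hand side has strictly positive constant term --- using the nonnegativity of the probability coefficients $\pi_m(j)$ to rule out cancellation --- is precisely the point the paper leaves implicit, and it is carried out correctly in all cases of $s_m\in\{0,1\}$.
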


\begin{lemma}
  \label{lemma:computation of s_m}
Consider the 3-expansion of $m$: $m=\sum_{j\ge0} m_j 3^j$, where $m_j\in\{0, 1, 2\}$.
Let $i\egdef  \inf\{j:\ m_j\not=1\}$. 
Then $s_m=1$ if $m_i=2$ and $s_m=0$ if $m_i=0$. 
\end{lemma}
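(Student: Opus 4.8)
The plan is to derive from the recurrence formulas~\eqref{Eq:recurrence for l(m)} for $\ell$ a self-contained recurrence for $s_m=\ell(m+1)-\ell(m)$ itself, and then match it against the digit characterization by induction. First I would split according to the residue of $m$ modulo $3$. If $m=3k$, then $\ell(m)=\ell(3k)=k+\ell(k)$ and $\ell(m+1)=\ell(3k+1)=k+\ell(k)$, so $s_{3k}=0$. If $m=3k+2$, then $\ell(m)=\ell(3k+2)=k+\ell(k+1)$ while $\ell(m+1)=\ell(3(k+1))=(k+1)+\ell(k+1)$, giving $s_{3k+2}=1$. Finally, if $m=3k+1$, then $\ell(m)=\ell(3k+1)=k+\ell(k)$ and $\ell(m+1)=\ell(3k+2)=k+\ell(k+1)$, so $s_{3k+1}=\ell(k+1)-\ell(k)=s_k$.

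This yields the clean reduction
$$
s_{3k}=0,\qquad s_{3k+1}=s_k,\qquad s_{3k+2}=1,
$$
in which the value of $s_m$ is decided by the last 3-adic digit $m_0$ of $m$ whenever $m_0\in\{0,2\}$, and otherwise is passed down to $k=\lfloor m/3\rfloor$. I would read off the statement of the lemma directly from this recurrence. When $m_0=0$ we have $i=0$ and $m_i=0$, and indeed $s_m=0$; when $m_0=2$ we have $i=0$ and $m_i=2$, and indeed $s_m=1$; these are the base cases.

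For the remaining case $m_0=1$ I would argue by induction on $m$ (equivalently on the number of significant 3-adic digits). Here $i\ge1$, and the key observation is that the 3-expansion of $k=\lfloor m/3\rfloor$ is the shifted sequence $(m_1,m_2,\dots)$, so its corresponding index is $i'=\inf\{j:\ m_{j+1}\neq1\}=i-1$ and its governing digit is $k_{i'}=m_i$. Since $s_m=s_k$ and $k<m$, the induction hypothesis applied to $k$ gives $s_k=1$ if $m_i=2$ and $s_k=0$ if $m_i=0$, which is exactly the claim for $m$.

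The only delicate point, and the place where I would take care, is the digit-shift bookkeeping in the inductive step: one must verify that the infimum defining $i$ for $m$ and the infimum defining $i'$ for $k$ are related by the single shift $i'=i-1$, and that this shift carries the relevant nonunit digit unchanged. Everything else is a direct reading of the three-case recurrence, so I do not expect any genuine obstacle beyond confirming this indexing.
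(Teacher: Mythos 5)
Your proof is correct and follows essentially the same route as the paper: the paper's proof consists precisely of deriving $s_{3m}=0$, $s_{3m+2}=1$ and $s_{3m+1}=s_m$ from~\eqref{Eq:recurrence for l(m)}, leaving the digit-shift induction implicit. You have merely written out in full the case-by-case verification and the inductive step that the paper states in one line.
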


\begin{proof}
  By~\eqref{Eq:recurrence for l(m)}, we see that $s_{3m}=0$ and $s_{3m+2}=1$. Moreover, 
  $s_{3m+1}=s_m$.
\end{proof}

\subsection{Degree of $\bm{\tilde P_m}$}
\label{Sec:degree}

Let $d_m$ be the degree of $\tilde P_{m}$: Using~\eqref{Eq:relation degree}, we get $d_m=\mbox{deg}(P_m)-\ell(m) =m-2\ell(m)$. 
Hence, we easily check that 
\begin{equation}
\label{eq:dif d_m}
  d_{m+1}-d_m=1-2s_m\in\{-1,1\},
\end{equation}
and $s_m=\ind{\{d_m>d_{m+1}\}}$.
By Proposition~\ref{Prop:recurrence for the reduced polynomials}, we have
\begin{equation}
  \label{eq:induction for d_m}d_{3m}=d_m, \quad d_{3m+1}=d_m+1, \quad d_{3m+2}=d_{m+1}+1 \ \mbox{ and }\ d_{3m+3}=d_{m+1}.
\end{equation}

We thus get an algorithm to compute the degree $d_m$ of $\tilde P_m$.
Consider the 3-expansion of $m$: $m=\sum_{j\ge0} m_j 3^j$. 
In this expansion, remove all 1's and count the number of blocks of 2's.
Then $d_m$ is equal to the number of removed 1's plus twice the number of 2-blocks. 

Example: Consider $m$ whose expansion in base 3 is $212202$. Remove one $1$, you get $22202$ (two 2-blocks). Hence $d_m=1+2\times2=5$.

\begin{corollary}
The first appearance of a reduced polynomial of degree $d$ is observed at 
$$
m=
2+\sum_{j=1}^{d-2}3^j= \frac{3^{d-1}+1}{2}.
$$
\end{corollary}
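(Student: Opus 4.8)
The plan is to introduce the first-appearance function $f(d) := \min\{m \ge 0 : d_m = d\}$ and to prove, for $d \ge 2$, that it satisfies the first-order recurrence $f(d+1) = 3f(d) - 1$ with base value $f(2) = 2$. Solving this recurrence then yields $f(d) = (3^{d-1}+1)/2$ at once, which is exactly the claimed first appearance of a reduced polynomial of degree $d$.

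First I would record that the announced value does have degree $d$, using the degree algorithm stated just before the corollary: the base-$3$ expansion of $m = (3^{d-1}+1)/2 = 2 + \sum_{j=1}^{d-2}3^j$ is $m_0 = 2$ and $m_1 = \dots = m_{d-2} = 1$. Removing the $d-2$ ones leaves the single block ``$2$'', so $d_m = (d-2) + 2\cdot 1 = d$. In particular degree $d$ is attained, so $f(d)$ is well defined, and since $d_0 = 0$, $d_1 = 1$, $d_2 = 2$ we have the base case $f(2) = 2$.

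The heart of the argument is the recurrence for $f$, which I would extract from the four identities \eqref{eq:induction for d_m}. Fix $d \ge 2$ and let $m = f(d+1)$, splitting according to $m \bmod 3$. If $m = 3m'$ then $d_{m'} = d_m = d+1$ with $m' < m$ (here $d+1 \ge 3$ forces $m \ge 1$, so $m' < 3m'$), contradicting minimality; thus $f(d+1)$ is never a multiple of $3$. If $m = 3m'+1$ then $d_{m'} = d$, forcing $m' \ge f(d)$, so the smallest candidate in this class is $3f(d)+1$. If $m = 3m'+2$ then $d_{m'+1} = d$, forcing $m'+1 \ge f(d)$, so the smallest candidate is $3(f(d)-1)+2 = 3f(d)-1$. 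Taking the minimum over the two admissible classes gives $f(d+1) = \min(3f(d)+1,\,3f(d)-1) = 3f(d)-1$.

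The step I expect to be the main (though modest) obstacle is precisely this minimality bookkeeping: one must check that inside each residue class the smallest $m$ realizing degree $d+1$ corresponds to the smallest $m'$ realizing degree $d$, which rests only on the monotonicity of $m' \mapsto 3m'+r$, and that the multiple-of-$3$ class can be discarded. Once $f(d+1) = 3f(d)-1$ is in hand, the telescoping $f(d) - \tfrac12 = 3\bigl(f(d-1)-\tfrac12\bigr) = 3^{d-2}\bigl(f(2)-\tfrac12\bigr) = 3^{d-1}/2$ gives $f(d) = (3^{d-1}+1)/2$ and completes the proof.
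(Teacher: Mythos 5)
Your proof is correct, and it takes a different route from the paper: the paper states this corollary without proof, presenting it as an immediate consequence of the digit-counting algorithm for $d_m$ (remove the $1$'s from the base-$3$ expansion, count $1$'s plus twice the number of $2$-blocks), so the intended argument is a combinatorial minimization over digit patterns --- one $2$ in the lowest position plus $d-2$ ones is the cheapest way to buy degree $d$, since any alternative (e.g.\ two $2$-blocks, which must be separated by a $0$ after removing $1$'s) forces a larger value of $m$. You bypass that minimization entirely by working directly from the recursion \eqref{eq:induction for d_m}: your case analysis on $m \bmod 3$ correctly rules out multiples of $3$ (since $d_{3m'}=d_{m'}$ with $m'<3m'$ contradicts minimality when $d+1\ge 3$), identifies $3f(d)+1$ and $3f(d)-1$ as the minimal candidates in the two remaining residue classes (both genuinely attained, since $d_{3f(d)+1}=d_{3(f(d)-1)+2}=d_{f(d)}+1=d+1$), and yields $f(d+1)=3f(d)-1$, which telescopes to $(3^{d-1}+1)/2$ from $f(2)=2$. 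Your approach buys a fully rigorous minimality argument that the paper leaves implicit (the digit algorithm itself being derived from the same identities \eqref{eq:induction for d_m}), at the cost of the base-case bookkeeping $d_0=0$, $d_1=1$, $d_2=2$, which you handle correctly; note also that your attainment check via the digit algorithm, while fine, is dispensable, since the candidate $3f(d)-1$ realizing degree $d+1$ makes the existence of $f(d+1)$ part of the same induction.
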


\section{Properties of the probability distribution $\pi_m$}

\label{sec:properties}

\subsection{Unimodality of the distribution $\bm{\pi_m}$}
\label{Sec:profile of the polynomials}

We set $b_j^{(m)}\egdef \pi_m(j+\ell(m))=\lambda\left(\phi^{(m)}=j+\ell(m)\right)$, so that 
$$\tilde P_m(X)=\sum_{j=0}^{d_m} b_j^{(m)}X^j .$$
Observe that by Lemma~\ref{lemma:symmetry of the polynomials}, the coefficients of $\tilde P_m$ are symmetric: 
$b_j^{(m)}=b_{d_m-j}^{(m)}$ for all $0\le j\le d_m$.

\begin{lemma}
 \label{lemma:coeff croissants}
For any $m\ge0$, the coefficients $\left(b_j^{(m)}\right)_{0\le j\le [d_m/2]}$ of $\tilde P_m$ are increasing.
\end{lemma}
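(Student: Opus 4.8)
The plan is to prove the statement by strong induction on $m$, using the recurrence formulas of Proposition~\ref{Prop:recurrence for the reduced polynomials}. It is convenient to carry the symmetry along with the monotonicity, so I would strengthen the inductive hypothesis to: each $\tilde P_m$ is \emph{symmetric unimodal}. Precisely, calling $c_m\egdef d_m/2\in\tfrac12\ZZ$ its center (recall $b_j^{(m)}=b_{d_m-j}^{(m)}$ from Lemma~\ref{lemma:symmetry of the polynomials}), I require that the coefficient sequence $(b_j^{(m)})_j$, extended by $0$ outside $\{0,\dots,d_m\}$, be non-decreasing for every index $j$ with $j\le c_m$. For integer $j\le d_m/2$ this is exactly the assertion of the lemma, combined with the already established symmetry.

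Then I would isolate three closure properties of the class of nonnegative polynomials that are symmetric unimodal, where $Q=\sum q_jX^j$ is said to have center $c\in\tfrac12\ZZ$ when $q_j=q_{2c-j}$ and $(q_j)$ is non-decreasing for $j\le c$: (i) if $Q$ has center $c$, then $X^aQ$ has center $c+a$; (ii) if $Q$ has center $c$, then $(1+X)Q$ is again symmetric unimodal, with center $c+\tfrac12$; (iii) if $Q_1$ and $Q_2$ are symmetric unimodal with the \emph{same} center $c$, then so is $Q_1+Q_2$. Property (i) is immediate and (iii) follows by adding the defining inequalities coefficient-wise. Property (ii) is the only non-formal point: the coefficients of $(1+X)Q$ are $q_j+q_{j-1}$, and the required inequality $(q_j+q_{j-1})-(q_{j-1}+q_{j-2})=q_j-q_{j-2}\ge0$ holds for every $j\le c+\tfrac12$ precisely because $(q_j)$ is non-decreasing up to $c$ and symmetric about $c$; this is where the symmetry of Lemma~\ref{lemma:symmetry of the polynomials} is essential.

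With these tools the induction is mechanical. The case $\tilde P_{3m}=\tilde P_m$ is trivial. For the other two, I would split on the value of $s_m\in\{0,1\}$, equivalently on the sign of $d_{m+1}-d_m=1-2s_m$ (see~\eqref{eq:dif d_m}), and read off from Proposition~\ref{Prop:recurrence for the reduced polynomials} that each of $3\tilde P_{3m+1}$ and $3\tilde P_{3m+2}$ is a sum of one term of the form $(1+X)\tilde P_k$ and one term of the form $X^a\tilde P_{k'}$, with $k,k'\in\{m,m+1\}$. The only computation to perform is to check, in each of the four resulting cases, that the two summands have a \emph{common} center: using $c_{m+1}=c_m\pm\tfrac12$ according to $s_m$, together with (i) and (ii), one finds in every case that both centers equal the value $d_{3m+r}/2$ predicted by~\eqref{eq:induction for d_m}. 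Property (iii) then shows that $\tilde P_{3m+1}$ and $\tilde P_{3m+2}$ are symmetric unimodal, closing the induction. The base cases $m=0$ ($\tilde P_0=1$) and $m=1$ ($\tilde P_1=\tfrac12(1+X)$) are verified by hand, the latter being genuinely needed because the recurrence for $\tilde P_1$ refers to $\tilde P_1$ itself.

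The main obstacle is Property (ii): monotonicity is \emph{not} preserved under arbitrary multiplication, and the argument leans on the symmetry of the coefficients around the center. Consequently the bookkeeping of centers in the last step must be done with care, since a shift by the wrong half-integer would destroy the common-center hypothesis of (iii). Everything else—the degree identities~\eqref{eq:induction for d_m}, the values of $s_m$, and coefficient-wise additivity—is routine.
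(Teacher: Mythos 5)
Your proof is correct and follows essentially the same route as the paper: induction on $m$ via the recurrence of Proposition~\ref{Prop:recurrence for the reduced polynomials}, with the symmetry of Lemma~\ref{lemma:symmetry of the polynomials} supplying exactly the boundary inequality $q_j\ge q_{j-2}$ at the turnaround (the paper's step $b_{[d_m/2]+1}^{(m)}=b_{[d_m/2]}^{(m)}$). Your bookkeeping differs only cosmetically---you track half-integer centers through the four cases where the paper instead reduces without loss of generality to $s_m=0$ via the reversal symmetry of the recurrence map $F$ and then splits on the parity of $d_m$---but the underlying argument is the same.
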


\begin{proof}
The lemma holds for $\tilde P_0=1$ and $\tilde P_1(X)=(1+X)/2$. 
Using Proposition~\ref{Prop:recurrence for the reduced polynomials}, we prove the lemma by induction on $m$. 
Assume the property we want to prove is satisfied for some $m$ and $m+1$ and let us prove it is also true for $3m, 3m+1, 3m+2$ and $3m+3$. It obviously holds for $3m$ and $3m+3$ since $\tilde P_{3m}=\tilde P_m$ and $\tilde P_{3m+3}=\tilde P_{m+1}$.
We can assume without loss of generality that $d_m<d_{m+1}$ (that is $s_m=0$). Indeed, the recurrence formulas have the form 
$$
\left(\tilde P_{3m},\tilde P_{3m+1},\tilde P_{3m+2},\tilde P_{3m+3}\right) = F\left(\tilde P_{m},\tilde P_{m+1}\right),
$$
where $F$ is such that 
$$
\left(\tilde P_{3m+3},\tilde P_{3m+2},\tilde P_{3m+1},\tilde P_{3m}\right) = F\left(\tilde P_{m+1},\tilde P_{m}\right).
$$
By Proposition~\ref{Prop:recurrence for the reduced polynomials}, we have 
$3b_0^{(3m+1)} = b_0^{(m)}+b_{0}^{(m+1)}$ and for $j\ge1$ 
$$
3 b_j^{(3m+1)} = b_j^{(m)} + b_{j-1}^{(m)} + b_{j}^{(m+1)}.
$$
Recall that $d_{3m+1}=d_m+1$ (see Section~\ref{Sec:degree}) and we assumed that $d_{m+1}=d_m+1$. Hence, if $d_m$ is even, we have 
$[d_{3m+1}/2]=[d_m/2]=[d_{m+1}/2]$ and the three terms on the RHS of the above equation are increasing functions of $j\in\{0, \dots, [d_{3m+1}/2]\}$. 
If $d_m$ is odd, only the first term on the RHS may not be increasing for the largest value of $j$. But in this case, because of the symmetry of the coefficients (see Lemma~\ref{lemma:symmetry of the polynomials}), we have 
$b_{[d_m/2]+1}^{(m)} =b_{[d_m/2]}^{(m)}$.
This proves that the property holds for $\tilde P_{3m+1}$.  A similar argument proves the property for $\tilde P_{3m+2}$.
\end{proof}

As a consequence of Lemma~\ref{lemma:symmetry of the polynomials} and  Lemma~\ref{lemma:coeff croissants}, we obtain:
\begin{prop}
\label{Prop: symmetry and unimodality}
 For all $m\ge1$, the probability distribution  $\pi_m$ is symmetric and unimodal.
\end{prop}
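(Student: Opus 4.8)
The plan is to deduce Proposition~\ref{Prop: symmetry and unimodality} as a direct corollary of the two lemmas that precede it, translating the statement about the reduced polynomial $\tilde P_m$ back into a statement about the full distribution $\pi_m$. Recall from the definition $b_j^{(m)}=\pi_m(j+\ell(m))$ that the coefficients of $\tilde P_m$ are exactly the \emph{nonzero} values of $\pi_m$, re-indexed so that the support starts at $0$; the zero coefficients of $\pi_m$ occur precisely for $j<\ell(m)$ and for $j>\ell(m)+d_m=m-\ell(m)$ (using \eqref{Eq:relation degree}). So every structural feature of $(b_j^{(m)})$ transfers verbatim to $(\pi_m(j))$ after the shift by $\ell(m)$.

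First I would address symmetry. This is immediate: by Lemma~\ref{lemma:symmetry of the polynomials} we already have $\pi_m(j)=\pi_m(m-j)$ for all $0\le j\le m$, so there is genuinely nothing to prove here beyond citing that lemma. Second, for unimodality, I would invoke Lemma~\ref{lemma:coeff croissants}, which states that the coefficients $(b_j^{(m)})_{0\le j\le[d_m/2]}$ are increasing. Combined with the symmetry $b_j^{(m)}=b_{d_m-j}^{(m)}$ (noted just before Lemma~\ref{lemma:coeff croissants}), this shows the sequence $(b_j^{(m)})_{0\le j\le d_m}$ increases up to the middle index $[d_m/2]$ and then decreases, which is exactly unimodality of the finite sequence of nonzero values of $\pi_m$. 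Prepending and appending the runs of zeros (for indices outside the support) preserves unimodality, since a sequence that is $0,\dots,0$, then nondecreasing, then nonincreasing, then $0,\dots,0$ is still unimodal. Hence $\pi_m$ is a symmetric, unimodal probability distribution on $\ZZ$ with support centered at $m/2$.

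I do not expect any serious obstacle: the entire content has been front-loaded into Lemmas~\ref{lemma:symmetry of the polynomials} and~\ref{lemma:coeff croissants}, and the proposition is essentially a repackaging. The only point requiring a word of care is the bookkeeping at the center of the distribution when $d_m$ is odd versus even, to confirm that ``increasing on $[0,[d_m/2]]$ plus symmetry'' really does yield a single peak (or a symmetric plateau of two equal central values) rather than accidentally creating two separated maxima; but this follows automatically from the symmetry relation, since the two halves meet continuously at the center. I would therefore write the proof in two short sentences, one citing each lemma, and conclude.

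\begin{proof}
Symmetry of $\pi_m$ is exactly the content of Lemma~\ref{lemma:symmetry of the polynomials}. For unimodality, recall that the nonzero values of $\pi_m$ are the coefficients $b_j^{(m)}=\pi_m(j+\ell(m))$ of $\tilde P_m$, for $0\le j\le d_m$. By Lemma~\ref{lemma:coeff croissants}, the sequence $(b_j^{(m)})_{0\le j\le[d_m/2]}$ is increasing, and by the symmetry $b_j^{(m)}=b_{d_m-j}^{(m)}$ it is then decreasing for $[d_m/2]\le j\le d_m$. Thus $(b_j^{(m)})_{0\le j\le d_m}$ is unimodal, and since $\pi_m$ is obtained from this finite sequence by prepending and appending values equal to $0$, the distribution $\pi_m$ is itself unimodal.
\end{proof}
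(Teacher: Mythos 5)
Your proof is correct and matches the paper exactly: the paper states Proposition~\ref{Prop: symmetry and unimodality} as an immediate consequence of Lemma~\ref{lemma:symmetry of the polynomials} (symmetry) and Lemma~\ref{lemma:coeff croissants} (increase up to the middle index), with no further argument given. Your extra bookkeeping about the zero values outside the support of $\pi_m$ and the behavior at the central index is a harmless elaboration of the same reasoning.
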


\subsection{Asymptotic behavior of $\bm{\pi_m}$ when $\bm{d_m\to\infty}$}

Recall that, for all $m$, the coefficients of the polynomial $P_m$ are given by the probability distribution $\pi_m$ on $\ZZ$ which is symmetric and unimodal (see Proposition~\ref{Prop: symmetry and unimodality}). 
Recall that $d_m$ is the degree of the reduced polynomial $\tilde P_m$, that is $(d_m+1)$ is the number of nonzero coefficients of $P_m$.

For any $m\ge1$, consider the Fourier transform $\widehat \pi_m$ defined by 
$$\widehat \pi_{m}(z) \egdef \sum_j \pi_m(j)z^{-j}\quad \forall z\in S^1.$$
Observe that $\widehat \pi_{m}(z) = P_m(1/z)=P_m(z)z^{-m}$.
Moreover, we recover $\pi_m(j) $ by the inverse Fourier transform 
$$
\pi_m(j)=\int_{S^1} z^{j}\widehat \pi_{m}(z) \, dz.
$$

\begin{lemma}
\label{Lemma: coeff polynome}
$$\sup_{j\in\ZZ}\pi_m(j) \tend{d_m}{\infty}0.$$
\end{lemma}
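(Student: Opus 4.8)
The plan is to reduce the uniform bound on the masses of $\pi_m$ to an $L^1$ decay estimate for the Fourier transform $\widehat\pi_m$ on $S^1$, and then to extract that decay from the multiplicative structure hidden in the recurrence of Theorem~\ref{Th:recurrence for P_m}. First I would use the inversion formula, which gives for every $j$
\[
\pi_m(j) = \int_{S^1} z^{j}\,\widehat\pi_m(z)\,dz, \qquad\text{hence}\qquad \sup_{j}\pi_m(j)\le \int_{S^1}\bigl|\widehat\pi_m(z)\bigr|\,dz.
\]
Since $\widehat\pi_m(z)=P_m(1/z)=P_m(z)z^{-m}$ and $P_m$ has real coefficients, one has $|\widehat\pi_m(e^{i\theta})|=|P_m(e^{i\theta})|$, so it suffices to prove $\int_{S^1}|P_m(z)|\,dz\to0$ as $d_m\to\infty$. (Symmetry and unimodality, Proposition~\ref{Prop: symmetry and unimodality}, are not needed for this bound, but they identify the supremum with the central coefficient and serve as a consistency check.)

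Next I would recast Theorem~\ref{Th:recurrence for P_m} as a linear recursion for the pair $(P_m,P_{m+1})$. For $w=e^{i\theta}$ and $r\in\{0,1,2\}$ one checks that
\[
\begin{pmatrix} P_{3m+r}(w)\\ P_{3m+r+1}(w)\end{pmatrix}=w^{m}\,A_r(w)\begin{pmatrix}P_m(w)\\ P_{m+1}(w)\end{pmatrix},
\]
with explicit $2\times2$ matrices $A_0,A_1,A_2$ whose entries lie in $\{1,\,w,\,\tfrac13,\,\tfrac{w}{3},\,\tfrac{1+w}{3}\}$. Iterating along the base-$3$ digits $m=\sum_j m_j3^j$ expresses $P_m(w)$, up to a unimodular factor, as the first entry of the product $A_{m_0}(w)\cdots A_{m_{n-1}}(w)$ applied to $(1,(1+w)/2)^{\mathsf T}$. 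At $w=1$ each $A_r(1)$ is row-stochastic and fixes $(1,1)^{\mathsf T}$, recovering $P_m(1)=1$; for $w\ne1$ the entries $\tfrac13$ and $\tfrac{1+w}{3}$, of modulus $\tfrac23|\cos(\theta/2)|<\tfrac23$, make $A_1,A_2$ genuine contractions on any arc $|\theta|\ge\delta$.

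The crux is then to match the loss of mass in $|P_m(e^{i\theta})|$ with the growth of $d_m$. The matrix $A_0$ preserves the first coordinate (this reflects $|P_{3m}|=|P_m|$ and $d_{3m}=d_m$), so a digit $0$ produces neither contraction nor degree growth; by contrast, the degree algorithm of Section~\ref{Sec:degree} shows that $d_m$ counts exactly the non-trivial transitions (digits $1$ and the boundaries of $2$-blocks), each of which forces a genuinely contracting application of $A_1$ or $A_2$. I would therefore prove, by induction along the recursion on a suitable norm of the coupled vector $(P_m,P_{m+1})$, a uniform estimate of the form $|P_m(e^{i\theta})|\le C\,\rho(\delta)^{c\,d_m}$ on $\{|\theta|\ge\delta\}$ with $\rho(\delta)<1$. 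Splitting $\int_{S^1}|P_m|=\int_{|\theta|<\delta_m}+\int_{|\theta|\ge\delta_m}$ for a slowly decreasing $\delta_m\to0$, the first piece is at most $2\delta_m$ since $|P_m|\le1$, and the second is at most $2\pi\,C\rho(\delta_m)^{c\,d_m}$; choosing $\delta_m$ decaying slowly enough that $\rho(\delta_m)^{c\,d_m}\to0$ still holds then yields $\int_{S^1}|P_m|\to0$.

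The main obstacle is the contraction estimate of the third step: the rate $\rho(\delta)$ degenerates to $1$ as $\theta\to0$, because the transfer matrices become stochastic at their common fixed point $w=1$, so no single bound over a fixed arc suffices and one must carefully balance the shrinking neighborhood $|\theta|<\delta_m$ against the number $\sim d_m$ of contracting factors. This is the discrete analogue of a local central limit theorem for the dependent sequence $(\phi(S^jx))_{j}$, and making the induction on the pair $(P_m,P_{m+1})$ uniform in $\theta$—in particular handling long $2$-blocks, where $A_2$ contracts only at the block boundaries—is where the real work lies.
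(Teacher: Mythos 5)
Your overall route is the paper's route: reduce via Fourier inversion to showing $\int_{S^1}|P_m(z)|\,dz\to0$, then extract from the recurrence of Theorem~\ref{Th:recurrence for P_m} a contraction whose number of occurrences is controlled by $d_m$. But you defer the decisive estimate (your ``real work''), and there the paper's execution is much lighter than your transfer-matrix plan. No $2\times2$ matrices are needed: taking moduli directly in $3P_{3m+1}=X^m\bigl((1+X)P_m+P_{m+1}\bigr)$ and $3P_{3m+2}=X^m\bigl(XP_m+(1+X)P_{m+1}\bigr)$ gives, for $z\in S^1$ and $m$ not a multiple of $3$,
$$
|P_m(z)|\le \alpha(z)\,\sup\bigl(|P_{m'}(z)|,|P_{m''}(z)|\bigr),\qquad \alpha(z)\egdef\frac{1+|1+z|}{3},
$$
with $m',m''<m$, while $|P_{3m}(z)|=|P_m(z)|$; and by \eqref{eq:induction for d_m} together with $|d_{m+1}-d_m|=1$ (see \eqref{eq:dif d_m}) one has $d_{m'}\ge d_m-2$ and $d_{m''}\ge d_m-2$. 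A three-line scalar induction on $m$ (base cases $|P_1|,|P_2|\le1$) then yields the pointwise bound $|P_m(z)|\le\alpha(z)^{(d_m-2)/2}$, which is exactly \eqref{Eq:Fourier}. This also dissolves your worry about long $2$-blocks: it is true, as your triangular matrix $A_2$ shows, that contraction need not accumulate inside a block (the branch through $P_{m+1}$ can pass uncontracted, e.g.\ $m+1$ a power of $3$), but the induction is bookkept purely in terms of $d_m$, and inside a $2$-block $d_m$ does not grow either; the conservative inequality $d_{m'},d_{m''}\ge d_m-2$ is all that is needed, block structure never enters.

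The second divergence is your $\delta_m$-splitting of the integral, forced by the degeneration of your rate $\rho(\delta)$ as $\delta\to0$. The balancing you describe would work (e.g.\ $\delta_m\egdef d_m^{-1/4}$ against $\rho(\delta)\approx 1-c\delta^2$), but it is unnecessary: since $\alpha(z)<1$ for every $z\ne1$ and $|P_m(z)|\le1$ everywhere, the pointwise bound $\alpha(z)^{(d_m-2)/2}\to0$ for each fixed $z\ne1$ combined with dominated convergence already gives $\int_{S^1}|P_m(z)|\,dz\to0$ along any sequence with $d_m\to\infty$, hence $\sup_j\pi_m(j)\to0$. So the ``local CLT difficulty'' near $\theta=0$ that you identify as the main obstacle never arises: only pointwise decay off $z=1$ is needed, not a rate uniform over shrinking arcs. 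In short, your proposal is the paper's argument in outline, but the step you leave open is closed by a simple scalar sup-induction with degree bookkeeping, and the quantitative arc decomposition should be replaced by dominated convergence.
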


\begin{proof}
Observe that for any $j\in\ZZ$, 
$$
\pi_m(j) \le \int_{S^1} |\widehat \pi_{m}(z)| \, dz =\int_{S^1} |P_{m}(z)| \, dz  .
$$
By Theorem~\ref{Th:recurrence for P_m}, we have for any $m\ge3$: 
If $m$ is a multiple of 3, then $|P_m(z)| = |P_{m/3}(z)|$. Otherwise, there exist $m'<m$ and $m''<m$ such that
$$
|P_{m}(z)|\le \frac{1+|1+z|}{3}\ \sup\left( |P_{m'}(z)|, |P_{m''}(z)| \right), 
$$
Moreover, by~\eqref{eq:induction for d_m}, 
$$
  d_{m'}\ge d_m-2, \quad\mbox{and } d_{m''}\ge d_m-2.
$$
Since $|P_1(z)|\le1$ and $|P_2(z)|\le1$, we easily prove by induction on $m$ that 
\begin{equation}
\label{Eq:Fourier}
  \forall m\ge1, \quad |P_{m}(z)|\le |\alpha(z)|^{(d_m-2)/2},
\end{equation}
where $\alpha(z)\egdef\frac{1+|1+z|}{3}$. 
Since $|\alpha(z)|<1$ for $z\not=1$, we conclude the proof of the lemma.
\end{proof}

\section{Weak limits of powers of $\hat T$}

Recall that $\L$ is the set of all weak limits of powers of $\hat T$, and that $\Theta$ is the ortho-projector to constants.
The purpose of this section is to prove the following Theorem:
\begin{theo}
\label{Thm:weak limits}
$$ \L = \{\Theta\}\cup \{ P_{m_1}(\hat T)\dots P_{m_r}(\hat T)\,\hat T^n,\ r\ge0,\ 1\le m_1\le\cdots\le m_r,\ n\in\ZZ\}. $$
Moreover, limits of the form $\hat T^n$ for some $n\in\ZZ$ can only be obtained as limits of $\hat T^{-k_j}$ for \emph{bounded} sequences $(k_j)$.
\end{theo}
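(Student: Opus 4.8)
The statement to prove is Theorem~\ref{Thm:weak limits}, a complete description of $\L$. My plan is to establish two inclusions separately, and then handle the final clause about bounded sequences.

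First I would prove the easy inclusion $\supseteq$: that every operator of the listed form actually occurs in $\L$. The trivial projector $\Theta$ arises because $T$ is weakly mixing, so along a suitable subsequence $\hat T^{-k_j}\to\Theta$. For the polynomial products, the key tool is the already-established convergence $\hat T^{-mh_n}\to P_m(\hat T)$ (the Theorem following Lemma~\ref{Lemma: uniformite dans la tour}). Since $\L$ is closed under the weak topology and under composition with the powers $\hat T^n$ (which are fixed unitaries), and since a product of weak limits can be realized by a diagonal/telescoping choice of exponents $-\sum_s m_s h_{n_s}$ with the $n_s$ growing fast enough along well-separated scales, I would show that $P_{m_1}(\hat T)\cdots P_{m_r}(\hat T)\,\hat T^n$ is a weak limit of the single power sequence $\hat T^{-(\sum_s m_s h_{n_s} - n)}$. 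The multiplicative structure of the weak closure of a single-operator orbit (it is a commutative semigroup, all elements being functions of $\hat T$) makes this routine once the scale separation is set up.

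The substantial content is the reverse inclusion $\subseteq$, and here the engine is Proposition~\ref{prop:factorization}, which I am allowed to assume: if $\hat T^{-k_j}$ converges to some $Q\neq\Theta$, then $Q$ factorizes as $P_m(\hat T)\,R$ for some $m\ge1$ and some $R\in\L$. The plan is to iterate this. Start with an arbitrary limit $Q\in\L\setminus\{\Theta\}$; factor out one polynomial $P_{m_1}(\hat T)$, leaving a remaining limit $R_1\in\L$. If $R_1=\Theta$ we stop; but $\Theta$ is not invertible whereas $P_{m_1}(\hat T)$ is a nonzero polynomial in a unitary, so I must argue that $P_m(\hat T)\,\Theta$ cannot be a genuinely new object and track it separately. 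If $R_1$ is again a nontrivial limit, apply the proposition once more. To guarantee this process \emph{terminates} (so that $r$ is finite and we land on a pure power $\hat T^n$), I would use a monotone quantity that strictly decreases at each factorization step. The natural candidate is a measure of ``how far $Q$ is from being a single power of $\hat T$'': each $P_m$ with $m\ge1$ strictly spreads out the spectral multiplier, so multiplying peels off a definite amount of ``non-pointmass'' content. I expect this termination argument to be the main obstacle, since one must rule out both an infinite product of factors and the degenerate possibility that the remainder collapses to $\Theta$ mid-process.

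For the termination I would make the decreasing quantity precise using the results of Section~\ref{sec:properties}: the operators in $\L$ that are functions of $\hat T$ correspond to probability distributions on $\ZZ$ (via the spectral calculus, matching the $\pi_m$), and the only ones of the form $\hat T^n$ are point masses $\delta_n$. Because each $P_m$ for $m\ge1$ is a nondegenerate probability polynomial (its distribution $\pi_m$ is not a point mass), convolving by it strictly enlarges the support, or equivalently strictly decreases the maximal coefficient $\sup_j$ of the associated distribution when $d_m\to\infty$, per Lemma~\ref{Lemma: coeff polynome}. Thus after finitely many factorizations the remaining factor must be a point mass, i.e.\ a power $\hat T^n$, giving the finite product form with $1\le m_1\le\cdots\le m_r$ (the ordering imposed merely by relabeling). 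Finally, for the last clause, I would observe that if $\hat T^{-k_j}\to\hat T^n$ along an \emph{unbounded} sequence, then passing to a subsequence where $|k_j|\to\infty$ forces a genuine factorization by some $P_m$ with $m\ge1$ (one can always extract a height-scale contribution when $k_j$ is large, by expanding $k_j$ along the heights $(h_n)$ as in Proposition~\ref{prop:factorization}), so the limit would carry a nontrivial polynomial factor and could not be the point mass $\hat T^n$; hence only bounded sequences can produce a pure power.
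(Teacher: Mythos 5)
Your overall architecture matches the paper's: the inclusion $\supseteq$ by iterating the convergence $\hat T^{-mh_n}\to P_m(\hat T)$ along well-separated height scales (the paper does exactly this via Lemma~\ref{Lemma:suite reduite}), and the inclusion $\subseteq$ by repeatedly applying Proposition~\ref{prop:factorization} until a bounded sequence remains, giving $\hat T^n$. The genuine gap is in your termination step. You must show that a non-trivial limit $L\neq\Theta$ admits only finitely many successive factorizations $L=P_{m_1}(\hat T)\cdots P_{m_r}(\hat T)L'$, and your mechanism --- each convolution ``strictly decreases the maximal coefficient, per Lemma~\ref{Lemma: coeff polynome}'' --- does not deliver this, for two reasons. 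First, Lemma~\ref{Lemma: coeff polynome} concerns a \emph{single} $\pi_m$ with $d_m\to\infty$, whereas the factors produced by the iteration can all have bounded reduced degree: $P_3=XP_1$, and more generally $\tilde P_{3^k}=\tilde P_1=(1+X)/2$, so an infinite factorization could consist entirely of degree-one reduced polynomials, to which the cited lemma says nothing. Second, a strict decrease at each step does not force decay to zero (a strictly decreasing positive sequence can have a positive limit), and no uniform per-step contraction exists: if $\nu'$ is nearly flat on a long interval, then $\sup_j(\pi_m\ast\nu')(j)$ is essentially $\sup_j\nu'(j)$. What is actually needed is a bound on the \emph{whole $r$-fold product}, uniform over all choices $1\le m_1\le\cdots\le m_r$, and this is the content of the paper's Lemma~\ref{Lemma: delta de convolees}, which you never invoke: the Fourier estimate \eqref{Eq:Fourier} gives $\bigl|\prod_{i=1}^r\widehat\pi_{m_i}(z)\bigr|\le\beta(z)^r$ with $\beta(z)<1$ for $z\neq1$, hence $\sup_j(\pi_{m_1}\ast\cdots\ast\pi_{m_r})(j)\le\int_{S^1}\beta(z)^r\,dz\to0$; combined with symmetry and unimodality (Proposition~\ref{Prop: symmetry and unimodality}) this controls $\delta(\pi_{m_1}\ast\cdots\ast\pi_{m_r})$, and with Lemma~\ref{Lemma:majoration de delta de convolution} and the representation $L=L_\nu$ one concludes that $r_L=\infty$ forces $\delta(\nu)=0$, i.e.\ $\nu=0$ and $L=\Theta$. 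Without this uniform estimate your induction has no well-founded decreasing quantity and cannot rule out an infinite chain of small factors. Note also that your framing (``after finitely many factorizations the remaining factor must be a point mass'') is not quite the right dichotomy: the point mass arises from the bounded-sequence case of Proposition~\ref{prop:factorization}, while the infinite-chain alternative is exactly the case $L=\Theta$.

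A secondary inaccuracy: you attribute the correspondence between elements of $\L$ and measures on $\ZZ$ to ``spectral calculus.'' Spectral theory only yields \emph{uniqueness} of the coefficients; the crucial structural fact that every $L\in\L$ has the form $L_\nu=\sum_j\nu(j)\hat T^j+(1-\nu(\ZZ))\Theta$ rests on the minimal self-joinings of $T$, which is how the paper sets it up. Granting that representation, your treatment of the final clause (an unbounded sequence forces, after extraction, a factor $P_m(\hat T)$ with $m\ge1$, and $\pi_m\ast\nu'$ is never a point mass, so the limit cannot be $\hat T^n$) is sound and in fact slightly more explicit than the paper, which leaves this point implicit.
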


\subsection{Correspondence between elements of $\L$ and measures on $\ZZ$}
Any $L\in\L$ is associated with a self-joining $\rho$ by 
$$
\rho(A\times B)\egdef \langle L\ind{A}, \ind{B}\rangle, \quad\forall A, B.
$$
Since $T$ has minimal self-joinings, $\rho$ is of the form 
$$
\rho_\nu\egdef \sum_{j\in\ZZ} \nu(j) \Delta_{-j} + (1-\nu(\ZZ)) \mu\otimes\mu,
$$
where $\Delta_{-j}(A\times B)\egdef \mu(A\cap T^{-j}B)$ and $\nu$ is a positive measure on $\ZZ$ with $\nu(\ZZ)\le 1$. 
Therefore, $L$ has the form 
$$
L= L_\nu\egdef \sum_{j\in\ZZ}\nu(j)\hat T^j +   (1-\nu(\ZZ))\Theta.
$$

We set, for any positive measure $\nu$ on $\ZZ$ with $\nu(\ZZ)\le 1$,
$$
\delta (\nu)\egdef \sum_{j\in\ZZ}|\nu(j+1)-\nu(j)|.
$$

\begin{lemma}
  \label{Lemma:majoration de delta de convolution}
  For any positive measures $\nu$ and $\nu'$ on $\ZZ$ of total mass $\le1$, we have $\delta (\nu\ast\nu')\le \delta(\nu)$.
\end{lemma}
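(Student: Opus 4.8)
The plan is to reduce everything to the elementary fact that taking successive differences commutes with convolution, after which the desired inequality becomes a one-line application of the triangle inequality together with the hypothesis $\nu'(\ZZ)\le1$.

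First I would introduce the finite-difference operator $D$ acting on measures on $\ZZ$ by $(D\nu)(j)\egdef\nu(j+1)-\nu(j)$, so that by definition $\delta(\nu)=\sum_{j\in\ZZ}|(D\nu)(j)|$. The key observation is that $D$ commutes with convolution: since $D$ amounts to convolving by the signed atomic measure supported on $\{-1,0\}$, one has the identity $D(\nu\ast\nu')=(D\nu)\ast\nu'$. Concretely, from the representation $(\nu\ast\nu')(k)=\sum_{i\in\ZZ}\nu(k-i)\,\nu'(i)$ one gets, by differencing in the variable $k$, the formula $(D(\nu\ast\nu'))(k)=\sum_{i\in\ZZ}(D\nu)(k-i)\,\nu'(i)=((D\nu)\ast\nu')(k)$.

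Then I would estimate the $\ell^1$-norm directly. By the triangle inequality and the fact that $\nu'\ge0$,
$$
\delta(\nu\ast\nu')=\sum_{k\in\ZZ}\Bigl|\sum_{i\in\ZZ}(D\nu)(k-i)\,\nu'(i)\Bigr|
\le\sum_{k\in\ZZ}\sum_{i\in\ZZ}|(D\nu)(k-i)|\,\nu'(i).
$$
Since all terms are nonnegative, Tonelli's theorem lets me interchange the two sums and sum over $k$ first; by translation invariance, $\sum_{k\in\ZZ}|(D\nu)(k-i)|=\delta(\nu)$ for each fixed $i$. Hence
$$
\delta(\nu\ast\nu')\le\delta(\nu)\sum_{i\in\ZZ}\nu'(i)=\delta(\nu)\,\nu'(\ZZ)\le\delta(\nu),
$$
using $\nu'(\ZZ)\le1$ in the last step. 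This is exactly the claimed bound; equivalently, it is Young's convolution inequality $\|(D\nu)\ast\nu'\|_{\ell^1}\le\|D\nu\|_{\ell^1}\,\|\nu'\|_{\ell^1}$ specialised to our setting.

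There is essentially no serious obstacle here: the only points requiring a small amount of care are the commutation identity $D(\nu\ast\nu')=(D\nu)\ast\nu'$ and the justification of the interchange of summation, both of which are routine because the measures are nonnegative and of finite total mass, so every series in sight converges absolutely. It is worth noting that the hypothesis $\nu(\ZZ)\le1$ is never used; only $\nu'(\ZZ)\le1$ enters, reflecting the asymmetry of the statement (the inequality is bounded by $\delta(\nu)$ and not by $\delta(\nu')$).
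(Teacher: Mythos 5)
Your proof is correct and is essentially the paper's own argument: expand the convolution, apply the triangle inequality, interchange the sums by nonnegativity, and use translation invariance together with $\nu'(\ZZ)\le1$ — the paper does exactly this, just without your explicit difference-operator notation. Your side remark that the hypothesis $\nu(\ZZ)\le1$ is never needed is accurate but does not change the substance.
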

\begin{proof}
\begin{align*}
   \delta (\nu\ast\nu') 
  &= \sum_{j\in\ZZ}|\nu\ast\nu'(j+1)-\nu\ast\nu'(j)|\\
  &= \sum_{j\in\ZZ}\left| \sum_k \Bigl(\nu(j+1-k)-\nu(j-k)\Bigr) \nu'(k) \right|\\
  &\le \sum_k \nu'(k) \sum_{j\in\ZZ}| \nu(j+1-k)-\nu(j-k)|
  \le \delta (\nu).
\end{align*}

\end{proof}

\begin{lemma}
\label{Lemma:convergence to theta}
  Let $(\nu_\ell)_\ell$ be a sequence of positive measures  with $\nu_\ell(\ZZ)\le 1$, such that 
  $\delta(\nu_\ell)\to 0$ as $\ell\to \infty$. Then 
  $L_{\nu_\ell}\to \Theta$.
\end{lemma}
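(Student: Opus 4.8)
The plan is to test the weak convergence $L_{\nu_\ell}\to\Theta$ directly on pairs of square-integrable functions, after splitting off the constant part. Writing $f=\Theta f+f_0$ and $g=\Theta g+g_0$, where $f_0\egdef f-\Theta f$ and $g_0\egdef g-\Theta g$ have zero mean, a direct computation using $\hat T\,\Theta=\Theta$, that $\hat T$ fixes the constants, and that $\int f_0\,d\mu=\int g_0\,d\mu=0$ (so the cross terms drop out) gives
$$
\langle L_{\nu_\ell}f,g\rangle-\langle\Theta f,g\rangle
=\sum_{j\in\ZZ}\nu_\ell(j)\,\langle\hat T^j f_0,g_0\rangle.
$$
Since weak operator convergence is exactly the convergence of $\langle L_{\nu_\ell}f,g\rangle$ to $\langle\Theta f,g\rangle$ for every $f,g$, it suffices to show that this last sum tends to $0$ for all mean-zero $f_0,g_0$.

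The next step is to pass to the spectral side. By the spectral theorem for the unitary operator $\hat T$ on $L^2(\mu)$, there is a finite complex measure $\sigma$ on the circle $S^1$, depending on $f_0$ and $g_0$ but not on $\ell$, such that $\langle\hat T^j f_0,g_0\rangle=\int_{S^1}z^j\,d\sigma(z)$ for all $j\in\ZZ$. Introducing $\widehat{\nu_\ell}(z)\egdef\sum_{j\in\ZZ}\nu_\ell(j)z^j$, which is a bounded function on $S^1$, Fubini's theorem (legitimate since $\nu_\ell(\ZZ)\le1$ and $|\sigma|(S^1)<\infty$) gives
$$
\sum_{j\in\ZZ}\nu_\ell(j)\,\langle\hat T^j f_0,g_0\rangle=\int_{S^1}\widehat{\nu_\ell}(z)\,d\sigma(z).
$$
The reduction to mean-zero functions pays off here: because $T$ is ergodic, the only $\hat T$-invariant functions are the constants, so the spectral projection onto the eigenvalue $1$ annihilates $f_0$, whence $\sigma(\{1\})=0$.

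The heart of the matter is then to turn the flatness hypothesis $\delta(\nu_\ell)\to0$ into pointwise decay of $\widehat{\nu_\ell}$ away from $z=1$. Summation by parts yields the identity $(1-z)\widehat{\nu_\ell}(z)=\sum_{j\in\ZZ}\bigl(\nu_\ell(j)-\nu_\ell(j-1)\bigr)z^j$, so that for every $z\in S^1$,
$$
|\widehat{\nu_\ell}(z)|\le\frac{\delta(\nu_\ell)}{|1-z|}.
$$
Hence $\widehat{\nu_\ell}(z)\to0$ for each fixed $z\ne1$. At the same time $|\widehat{\nu_\ell}(z)|\le\nu_\ell(\ZZ)\le1$ for all $z$, so the integrand is dominated by the constant function $1\in L^1(|\sigma|)$. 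Since $\sigma$ carries no mass at $z=1$, the dominated convergence theorem gives $\int_{S^1}\widehat{\nu_\ell}\,d\sigma\to0$, which is exactly what we need.

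The only delicate point is the vanishing of the atom of $\sigma$ at $z=1$: this is precisely where the reduction to zero-mean functions and the ergodicity of $T$ are used, and without it the constant component would survive the limit and prevent convergence to $\Theta$. Everything else is routine: a summation-by-parts estimate converting $\delta(\nu_\ell)\to0$ into decay of the Fourier transform off $1$, together with dominated convergence. I expect no genuine obstacle beyond correctly isolating the behaviour at $z=1$.
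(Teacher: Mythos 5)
Your proof is correct, but it follows a genuinely different route from the paper's. The paper stays entirely inside the joining formalism of this section: it computes $\rho_{\nu_\ell}(A\times B)-\rho_{\nu_\ell}(A\times TB)$, bounds the difference by $\delta(\nu_\ell)$, concludes that any subsequential limit $\rho$ of the self-joinings $\rho_{\nu_\ell}$ satisfies $\rho(A\times B)=\rho(A\times TB)$ for all $A,B$, and invokes ergodicity of $T$ to force $\rho=\mu\otimes\mu$, i.e.\ $L_{\nu_\ell}\to\Theta$. You instead argue spectrally: after splitting off constants you reduce to showing $\int_{S^1}\widehat{\nu_\ell}\,d\sigma\to 0$ for the spectral measure $\sigma=\sigma_{f_0,g_0}$ of $\hat T$, convert the hypothesis $\delta(\nu_\ell)\to 0$ via summation by parts into the pointwise bound $|\widehat{\nu_\ell}(z)|\le \delta(\nu_\ell)/|1-z|$, use ergodicity to kill the atom of $\sigma$ at $z=1$ (the spectral projection $E(\{1\})$ is the projection onto constants, which annihilates $f_0$), and finish by dominated convergence. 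Both arguments use only ergodicity of $T$; your identity $\langle L_{\nu_\ell}f,g\rangle-\langle\Theta f,g\rangle=\sum_j\nu_\ell(j)\langle \hat T^j f_0,g_0\rangle$ and the Fubini step are sound. What each approach buys: the paper's argument is softer and shorter, avoids the spectral theorem, and matches the correspondence $L\leftrightarrow \rho_\nu$ used throughout the section, at the cost of a compactness/subsequence extraction for joinings; yours is direct (no subsequence needed), quantitative away from $z=1$, and cleanly isolates the single point where ergodicity enters. One small point you should make explicit: dominated convergence requires $|\sigma|(\{1\})=0$ and not merely $\sigma(\{1\})=0$; this does hold, since $|\sigma_{f_0,g_0}|(\{1\})\le \|E(\{1\})f_0\|\,\|E(\{1\})g_0\|=0$ (or simply because the total variation of a complex measure on a singleton equals the modulus of its value there), so no gap results.
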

\begin{proof}
For any $A, B$, we have
$$\rho_{\nu_\ell}(A\times B) =\sum_j \nu_{\ell}(j)\, \mu(A\cap T^{-j}B) + (1-\nu_\ell(\ZZ))\mu(A)\mu(B)$$ and 
$$ \rho_{\nu_\ell}(A\times TB) = \sum_j \nu_{\ell}(j+1) \, \mu(A\cap T^{-j}B)+ (1-\nu_\ell(\ZZ))\mu(A)\mu(B).$$
Hence, 
$$
|\rho_{\nu_\ell}(A\times B) - \rho_{\nu_\ell}(A\times TB)|
\le \sum_j |\nu_\ell(j+1)-\nu_{\ell}(j)|\,  \mu(A\cap T^{-j}B) 
\le \delta(\nu_\ell).
$$
It follows that any self-joining $\rho$ which is a limit of a subsequence of $(\rho_{\nu_\ell})$ satisfies:
 $$
\forall A, B\quad  \rho(A\times B)= \rho(A\times TB).
 $$
 By ergodicity of $T$, we get that $\rho=\mu\otimes\mu$ and this proves the convergence of $L_{\nu_\ell}$ to~$\Theta$.
\end{proof}

\begin{lemma}
\label{Lemma: delta de convolees}
$$
\sup_{1\le m_1\le \dots\le m_r}\delta \left( \pi_{m_1}\ast\dots \ast\pi_{m_r}\right) \tend{r}{\infty} 0.
$$
\end{lemma}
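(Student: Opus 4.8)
The plan is to show that the $L^1(S^1)$ norms $\int_{S^1} |P_{m_1}(z)\cdots P_{m_r}(z)|\,dz$ govern $\delta(\pi_{m_1}\ast\cdots\ast\pi_{m_r})$, and then exploit the decay estimate \eqref{Eq:Fourier} together with Lemma~\ref{Lemma:majoration de delta de convolution}. First I would record that $\delta(\nu)$ admits a Fourier-analytic upper bound: since $\nu(j+1)-\nu(j)$ is the $j$-th Fourier coefficient of $(z-1)\widehat\nu(z)$ (up to a harmless factor of $z$), Parseval or the trivial sup-bound on Fourier coefficients gives $\delta(\nu)\le \sum_j |\nu(j+1)-\nu(j)|$ controlled by $\int_{S^1}|(z-1)\widehat\nu(z)|\,dz$ times the number of nonzero terms. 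The subtlety is that $\delta$ is an $\ell^1$-type quantity, so a pure $L^1(S^1)$ bound is not immediate; one must also use the finite support of the convolution to convert an $L^\infty$-coefficient bound into the $\ell^1$-sum.

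The cleaner route is to split the supremum over $1\le m_1\le\cdots\le m_r$ into two regimes according to whether $d_{m_r}$ (the degree of the reduced polynomial of the \emph{largest} index) is large. By Lemma~\ref{Lemma:majoration de delta de convolution}, $\delta(\pi_{m_1}\ast\cdots\ast\pi_{m_r})\le \delta(\pi_{m_r})$, so it suffices to make $\delta(\pi_{m})$ small whenever $d_m$ is large; and for the complementary regime, where $d_{m_r}$ stays bounded, one uses that $r\to\infty$ forces many of the indices $m_i$ to repeat, so the convolution of many copies of a fixed finitely-supported $\pi_m$ spreads out and flattens. Concretely, I would argue: for fixed bounded degree, the single-factor distribution $\pi_m$ is symmetric, unimodal, and supported on $d_m+1$ points with $\sup_j\pi_m(j)$ bounded away from the trivial case only when $d_m=0$ (i.e.\ $\pi_m=\delta_{\ell(m)}$ is a point mass, for which $\delta(\pi_m)=2$). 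Point masses are the genuine obstruction, and they correspond precisely to $m$ with $d_m=0$, that is $m=3^k$.

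Thus the heart of the matter is: if infinitely many factors are point masses ($m_i=3^{k_i}$), their convolution is again a point mass and $\delta$ does \emph{not} go to zero. So the statement as phrased must rely on the asymptotic $d_m\to\infty$ implicitly, or on the growth forced by $1\le m_1\le\cdots\le m_r$ with $r\to\infty$. I would therefore make precise that among any long increasing chain $m_1\le\cdots\le m_r$ one can extract a factor whose reduced degree is large, OR repeated factors that smooth out. The first case is handled by Lemma~\ref{Lemma: coeff polynome} via $\delta(\pi)\le 2\sup_j\pi(j)\cdot(\text{support length})$ combined with the flattening effect of convolution on the sup-norm; the second by the local central limit heuristic, namely that convolving $\gtrsim r$ copies of a fixed nondegenerate symmetric unimodal law yields $\sup_j(\cdots)\to 0$, whence $\delta\to 0$ by the same sup-times-support bound applied after noting the effective support grows like $\sqrt{r}$ but the sup decays like $1/\sqrt r$.

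\medskip

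More carefully, here is the skeleton I would write. Let $\Pi\egdef \pi_{m_1}\ast\cdots\ast\pi_{m_r}$. The key inequality is
\begin{equation}
\label{eq:delta-sup-bound}
\delta(\Pi)\ \le\ 2\,(\#\operatorname{supp}\Pi)\ \sup_{j}\Pi(j),
\end{equation}
which follows since each summand $|\Pi(j+1)-\Pi(j)|\le 2\sup_j\Pi(j)$ and only the support (plus one shift) contributes. Because each $\pi_{m_i}$ is supported on at most $d_{m_i}+1\le m_i+1$ points, $\#\operatorname{supp}\Pi\le 1+\sum_i d_{m_i}$. On the other hand, $\sup_j \Pi(j)\le \sup_j \pi_{m_r}(j)$ by unimodality-preservation under convolution, or more robustly $\sup_j\Pi(j)\le \prod_i \|\widehat\pi_{m_i}\|$ bounds via Fourier. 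I expect the main obstacle to be controlling the product $(\#\operatorname{supp})\times\sup$ uniformly: a crude use of \eqref{eq:delta-sup-bound} loses, because the support can grow as fast as the sup decays. The resolution is to go through Fourier: write
$$
\delta(\Pi)\ \le\ \int_{S^1} \bigl|(1-z)\,\widehat\Pi(z)\bigr|\,|K(z)|\,dz
$$
is the wrong track; instead use $\delta(\Pi)\le \|(1-z)\widehat\Pi\|_{L^1(S^1)}\cdot C$ only after a Bernstein-type inequality, which is delicate.

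The cleaner obstacle-free argument, and the one I would commit to, avoids \eqref{eq:delta-sup-bound} entirely and argues directly through $\delta(\Pi)\le\delta(\pi_{m_r})$ plus a dichotomy. I would prove the auxiliary claim that $\delta(\pi_m)\to 0$ as $d_m\to\infty$, by writing $\delta(\pi_m)=\sum_j|\pi_m(j+1)-\pi_m(j)|$ and using symmetry and unimodality (Proposition~\ref{Prop: symmetry and unimodality}): for a symmetric unimodal distribution the total variation of consecutive differences telescopes to $2\sup_j\pi_m(j)$, namely
$$
\delta(\pi_m)=\sum_{j}\bigl|\pi_m(j+1)-\pi_m(j)\bigr|=2\,\sup_j\pi_m(j),
$$
since the sequence increases then decreases. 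Then Lemma~\ref{Lemma: coeff polynome} gives $\delta(\pi_m)=2\sup_j\pi_m(j)\to 0$ as $d_m\to\infty$. This identity is the clean engine of the whole proof and is the step I would foreground.

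Finally I would close the gap left by bounded-degree chains. If along some sequence of tuples the quantity $\delta(\Pi)$ stays bounded below, then by the monotonicity $\delta(\Pi)\le\delta(\pi_{m_r})=2\sup_j\pi_{m_r}(j)$ the largest index $m_r$ must have \emph{bounded} reduced degree $d_{m_r}$, hence (since $m\mapsto d_m$ is unbounded and $m_r\ge r$) this can persist only if all $m_i$ lie in a finite set of bounded-degree indices. But $1\le m_1\le\cdots\le m_r$ with $r\to\infty$ and $m_i$ confined to a finite set forces some value to repeat $\to\infty$ times; convolving $N\to\infty$ copies of a fixed nondegenerate symmetric unimodal $\pi$ makes $\sup_j(\pi^{\ast N})(j)\to 0$ (local CLT, or directly: the Fourier transform $\widehat\pi(z)^N\to 0$ pointwise for $z\ne 1$ with a dominated-convergence bound), whence $\delta(\pi^{\ast N})=2\sup_j(\pi^{\ast N})(j)\to 0$ using the same symmetry-unimodality identity. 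Combining the two regimes yields the uniform statement. The step I expect to be the genuine obstacle is justifying the clean identity $\delta(\pi_m)=2\sup_j\pi_m(j)$ rigorously under only symmetry and unimodality (one must handle plateaus and the parity of the support carefully), and ensuring unimodality is preserved, or circumvented, under convolution of distinct $\pi_{m_i}$; I would resolve the latter by reducing to the monotonicity Lemma~\ref{Lemma:majoration de delta de convolution} so that only a \emph{single} $\pi_m$ (or a single repeated $\pi$) ever needs the identity.
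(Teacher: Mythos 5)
Your large-degree regime is fine, and your identity $\delta(\pi)=2\sup_j\pi(j)$ for symmetric unimodal laws is correct (it is essentially how the paper, too, passes from $\delta$ to a sup-norm). The genuine gap is the bounded-degree branch of your dichotomy, where two of your claims are false. First, $m_r\ge r$ fails, since the chain $1\le m_1\le\cdots\le m_r$ allows repetitions. Second, and fatally, the set $\{m:\ d_m\le D\}$ is \emph{infinite}: by the degree algorithm of Section~\ref{Sec:degree}, $d_{3^k}=1$ for every $k$ (base-3 expansion $10\cdots0$), and indeed $P_{3^k}(X)=X^{(3^k-1)/2}(1+X)/2$. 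So take $m_i=3^i$ for $i=1,\dots,r$: all indices are distinct, $d_{m_i}\equiv 1$, and $\delta(\pi_{m_r})=1$ for every $r$. Hence your reduction via Lemma~\ref{Lemma:majoration de delta de convolution} to the single largest factor is useless here, and since no factor repeats, your local-CLT step for $\pi^{\ast N}$ never applies. (Your premise that $m=3^k$ yields a point mass is also wrong --- $\pi_{3^k}$ is uniform on two adjacent points, and in fact $d_m\ge1$ for all $m\ge1$, so point masses never occur --- but that error is harmless; the true obstruction is infinitely many \emph{distinct} indices of degree~$1$.) In this example the lemma still holds, since $\pi_{3}\ast\cdots\ast\pi_{3^r}$ is a shifted Binomial$(r,1/2)$, which shows the flaw lies in your proof, not the statement: any argument that discards all but one factor, as Lemma~\ref{Lemma:majoration de delta de convolution} does, cannot capture the smoothing produced jointly by the $r$ factors.

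The repair is exactly the Fourier route you flagged and then abandoned, and it is the paper's proof. Every $m\ge1$ satisfies the \emph{single, degree-free} bound $|\widehat\pi_m(z)|=|P_m(z)|\le\beta(z)\egdef\sup\bigl(|P_1(z)|,\,|P_2(z)|,\,\alpha(z)\bigr)$ with $\alpha(z)=\frac{1+|1+z|}{3}$: reduce modulo powers of $3$ using $|P_{3m}|=|P_m|$, and for $m\not\equiv 0 \pmod 3$, $m\ge 4$, the recurrences of Theorem~\ref{Th:recurrence for P_m} give $|P_m(z)|\le\frac{1}{3}\bigl(1+|1+z|\bigr)=\alpha(z)$ since all $|P_k(z)|\le1$; the degree-dependent exponent of \eqref{Eq:Fourier} is not needed for this step. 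Then, since symmetry and unimodality are preserved under convolution (Proposition~\ref{Prop: symmetry and unimodality} together with the preservation result cited in the paper), one gets $\delta(\pi_{m_1}\ast\cdots\ast\pi_{m_r})\le 2\sup_j (\pi_{m_1}\ast\cdots\ast\pi_{m_r})(j)\le 2\int_{S^1}\beta(z)^r\,dz$, and dominated convergence with $\beta(z)<1$ for $z\ne1$ yields a bound depending only on $r$, which is precisely the claimed uniformity over all tuples. So the product bound over \emph{all} factors is the indispensable engine; your telescoping identity can replace the paper's inequality $\delta\le2\sup$, but the dichotomy built on Lemma~\ref{Lemma:majoration de delta de convolution} alone cannot close the argument.
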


\begin{proof}
Let us fix $(m_i)_{1\le i\le r}$ larger than 1. 
Using the fact that the convolution of symmetric unimodal distributions remains symmetric and unimodal (see~\cite{Purkayastha1998}), we obtain by Proposition~\ref{Prop: symmetry and unimodality} that $\pi_{m_1}\ast\dots \ast\pi_{m_r}$ is symmetric and unimodal. Thus
$$
\delta \left( \pi_{m_1}\ast\dots \ast\pi_{m_r} \right) \le 2\ \sup_{j\in\ZZ} \pi_{m_1}\ast\dots \ast\pi_{m_r}(j).
$$
Moreover, by~\eqref{Eq:Fourier}, we have for all $j$
$$
\left| \pi_{m_1}\ast\dots \ast\pi_{m_r}(j)\right| 
= \left| \int_{S^1} z^j \prod_{i=1}^r \widehat\pi_{m_i}(z) \, dz\right|  
\le \int_{S^1} \beta(z)^r\, dz ,
$$
where $\beta(z)\egdef \sup\Bigl(|P_1(z)|, |P_2(z)|,\alpha(z)  \Bigr)$. 
Since $\beta(z) <1$ if $z\not=1$, this ends the proof of the lemma.
\end{proof}

\subsection{Factorization in $\L$}

\begin{lemma}
\label{Lemma:suite reduite}
 Let $(k_j)$ be a sequence of integers such that $k_j=mh_{n_j}+k_j'$, where $k'_j/h_{n_j}\to 0$ and 
$  \lim_{j\to\infty} \hat T^{-k'_j} = L'$. 
Then 
$$\lim_{j\to\infty} \hat T^{-k_j} = P_m(\hat T) L'.$$
\end{lemma}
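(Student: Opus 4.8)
The plan is to reduce the statement to the convergence of the matrix coefficients $\langle \hat T^{-k_j}\ind{A},\ind{B}\rangle$ when $A$ and $B$ are levels of a fixed tower, and then to split the power as $\hat T^{-k_j}=\hat T^{-mh_{n_j}}\,\hat T^{-k'_j}$, exploiting that the first factor converges to $P_m(\hat T)$ \emph{uniformly} on tower-measurable functions (Lemma~\ref{Lemma: uniformite dans la tour}) while the second converges weakly to $L'$. Since the operators $\hat T^{-k_j}$ are unitary, hence equicontinuous, and since finite linear combinations of indicators of levels of the towers are dense in $L^2(\mu)$, a standard $3\eps$-argument reduces us to proving that $\langle \hat T^{-k_j}\ind{A},\ind{B}\rangle\to\langle P_m(\hat T)L'\ind{A},\ind{B}\rangle$ whenever $A,B$ are unions of levels of some fixed Tower $N$. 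We may and do assume $n_j\to\infty$, the only regime occurring in the applications.

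Fix such $A,B$ and take $j$ large enough that $n_j\ge N$, so that $A,B$ are unions of levels of Tower $n_j$. Set $h_j\egdef\hat T^{-k'_j}\ind{A}=\ind{T^{k'_j}A}$. The first key step is to approximate $h_j$ by $\tilde h_j\egdef\ind{C_j}$, where $C_j$ is the union of levels of Tower $n_j$ obtained by shifting by $k'_j$ the levels composing $A$ and discarding those leaving the tower. As $T^{k'_j}$ sends at most $|k'_j|$ levels out of the tower, one has $\|h_j-\tilde h_j\|_2^2\le 2|k'_j|\,\mu(E_{n_j,0})\asymp |k'_j|/h_{n_j}\to0$, using $k'_j/h_{n_j}\to0$. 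Recalling that $\langle \hat T^{-k}\ind{C},\ind{B}\rangle=\mu(T^kC\cap B)$, Lemma~\ref{Lemma: uniformite dans la tour} with $u=0$ says precisely that $\langle\hat T^{-mh_{n}}\ind{C},\ind{B}\rangle\to\langle P_m(\hat T)\ind{C},\ind{B}\rangle$ uniformly over $C,B$ unions of levels of Tower $n$. Writing
$$\langle \hat T^{-k_j}\ind{A},\ind{B}\rangle=\langle\hat T^{-mh_{n_j}}\tilde h_j,\ind{B}\rangle+\langle\hat T^{-mh_{n_j}}(h_j-\tilde h_j),\ind{B}\rangle,$$
the second term is bounded by $\|h_j-\tilde h_j\|_2\,\|\ind{B}\|_2\to0$ because $\hat T$ is unitary, while the uniform estimate applied to $\tilde h_j=\ind{C_j}$ gives $\langle\hat T^{-mh_{n_j}}\tilde h_j,\ind{B}\rangle=\langle P_m(\hat T)\tilde h_j,\ind{B}\rangle+o(1)$.

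It then remains to pass to the limit in $\langle P_m(\hat T)\tilde h_j,\ind{B}\rangle=\langle \tilde h_j,P_m(\hat T)^*\ind{B}\rangle$. Replacing $\tilde h_j$ by $h_j$ costs $o(1)$ (again by $\|h_j-\tilde h_j\|_2\to0$), and $\langle h_j,P_m(\hat T)^*\ind{B}\rangle=\langle \hat T^{-k'_j}\ind{A},P_m(\hat T)^*\ind{B}\rangle\to\langle L'\ind{A},P_m(\hat T)^*\ind{B}\rangle=\langle P_m(\hat T)L'\ind{A},\ind{B}\rangle$ by the weak convergence $\hat T^{-k'_j}\to L'$, tested against the fixed vector $P_m(\hat T)^*\ind{B}$. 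Combining the three estimates yields the claimed convergence.

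The main obstacle is conceptual rather than computational: weak operator convergence is not continuous for products, so one cannot simply multiply the limits of $\hat T^{-mh_{n_j}}$ and $\hat T^{-k'_j}$. What rescues the argument is the uniformity in Lemma~\ref{Lemma: uniformite dans la tour}, which lets us freeze the varying factor $\hat T^{-mh_{n_j}}$ into $P_m(\hat T)$, together with the fact that $k'_j=o(h_{n_j})$ makes $\hat T^{-k'_j}\ind{A}$ asymptotically measurable with respect to Tower $n_j$. Care is needed only in quantifying this last approximation and in checking that the test vector $P_m(\hat T)^*\ind{B}$, against which we use the weak convergence of $\hat T^{-k'_j}$, does not depend on $j$.
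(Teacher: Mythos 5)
Your proof is correct and follows essentially the same route as the paper's: both approximate the $k'_j$-translate of the tower-measurable set by a union of levels of Tower $n_j$ at cost $O(|k'_j|/h_{n_j})$, invoke the uniformity in Lemma~\ref{Lemma: uniformite dans la tour} to replace $\hat T^{-mh_{n_j}}$ by $P_m(\hat T)$ on that $j$-dependent set, and then pass to the limit via the weak convergence $\hat T^{-k'_j}\to L'$ tested against a fixed vector (the paper uses $P_m(\hat T)\ind{B}$, you use $P_m(\hat T)^*\ind{B}$ --- the same step up to transposition, since $L'$, being a weak limit of powers of $\hat T$, commutes with $P_m(\hat T)$). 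Your explicit density/equicontinuity reduction and your remark that $n_j\to\infty$ must be assumed are details the paper leaves implicit, but they do not change the argument.
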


\begin{proof}
  Let $A$ and $B$ be unions of levels of a fixed tower. For $j$ large enough, $A$ and $B$ are stilll unions of levels in Tower $n_j$, and there exists $A_j$, union of levels in Tower $n_j$, such that 
  $$ \mu(T^{-k'_j}A\vartriangle  A_j) \le |k'_j|/h_{n_j}. $$
  Then we have
  \begin{align*}
    \langle \ind{A},\hat T^{-k_j}\ind{B} \rangle & = \mu (T^{-k'_j}A\cap T^{mh_{n_j}}B) \\
    & = \mu (A_j\cap T^{mh_{n_j}}B) + O(k'_j/h_{n_j})
  \end{align*}
 Fix $\varepsilon>0$. By Lemma~\ref{Lemma: uniformite dans la tour}, for $j$ large enough, $\mu (A_j\cap T^{mh_{n_j}}B)$ is within $\varepsilon$ of $\sum_{i\in\ZZ}\pi_m(i) \mu(T^{-i}B\cap A_j)$. The latter expression
 is equal, up to a correction of order $k'_j/h_{n_j}$, to
 $$ \sum_{i\in\ZZ}\pi_m(i) \mu(T^{-i}B\cap T^{-k'_j}A) = \langle \ind{A}, \hat T^{-k'_j}P_m(\hat T)\ind{B}\rangle, $$
 which converges to $\langle \ind{A}, L'P_m(\hat T)\ind{B}\rangle$ as $j\to\infty$.
\end{proof}

Let $L\in\L$: There exists a sequence $(k_j)$ of integers such that 
$$
  \lim_{j\to\infty} \hat T^{-k_j} = L.
$$
If the sequence $(k_j)$ is bounded, then $L$ is of the form $\hat T^{n}$ for some $n\in\ZZ$.
Otherwise, without loss of generality, we can assume that $k_j$ is positive and $k_j\to +\infty$.

Recall that the heights $(h_n)$ of the Rokhlin towers satisfy: $h_{n+1}=3 h_n+1$. 
We decompose $k_j$ by the greedy algorithm along the integers $(h_n)$:
$$
k_j = \alpha_{0}^j h_{n_j} + \alpha_{1}^j h_{n_j-1}+ \dots + \alpha_{n_j}^j h_{0},
$$
where $\alpha_0^j\not=0$, $0\le \alpha_\ell^j\le 3$ for all $0\le \ell\le n_j$.
Observe that if $\alpha_\ell^j = 3$, then $\alpha_s^j=0$ for all $s>\ell$. 

Using a diagonal procedure to extract a subsequence if necessary, we can suppose that for all $\ell$, 
$\alpha_\ell^j\to\alpha_\ell$ as $j$ goes to $\infty$. 
We have $0\le \alpha_\ell\le 3$, $\alpha_0\not=0$ and if $\alpha_\ell = 3$, then $\alpha_s=0$ for all $s>\ell$. 

\begin{prop}
\label{prop:factorization}
Let $L\in\L$, and let $(k_j)$ and $(\alpha_\ell)$ be as above.
If there exists $r$ such that $\alpha_\ell=2$ for all $\ell>r$, or $\alpha_\ell=0$ for all $\ell>r$, then there exist $m\ge1$ and a sequence $(k'_j)$ such that 
$$
\lim_{j\to\infty} \hat T^{-k_j} = P_m(\hat T)\, L',
$$
where $L'= \lim_{j\to\infty} \hat T^{-k_j'}$. 

If there exist infinitely many $\ell$'s such that $\alpha_\ell\not=2$ and  infinitely many $\ell$'s such that $\alpha_\ell\not=0$, then $L =\Theta$.
\end{prop}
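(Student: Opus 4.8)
The plan is to treat the two assertions of Proposition~\ref{prop:factorization} by very different mechanisms, unified by the greedy expansion $k_j=\sum_\ell \alpha_\ell^j h_{n_j-\ell}$ and the convergence $\alpha_\ell^j\to\alpha_\ell$. For the first assertion, I would exploit the base-$3$ structure of the heights, $h_{n+1}=3h_n+1$, which makes the expansion behave almost like a $3$-adic expansion: writing $k_j=\alpha_0^j h_{n_j}+k_j^{(1)}$ where $k_j^{(1)}=\sum_{\ell\ge1}\alpha_\ell^j h_{n_j-\ell}$, the tail $k_j^{(1)}$ is comparable to $h_{n_j-1}$, so $k_j^{(1)}/h_{n_j}\to 0$ only after peeling off the leading term. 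The key observation is that the leading coefficient situation fits Lemma~\ref{Lemma:suite reduite}: if I can write $k_j=m\,h_{n_j}+k_j'$ with $k_j'/h_{n_j}\to 0$ and $\hat T^{-k_j'}\to L'$, then the factor $P_m(\hat T)$ splits off automatically. The hypothesis that $\alpha_\ell$ is eventually constant equal to $2$ or to $0$ is precisely what allows the geometric-type sum $\sum_{\ell> r}\alpha_\ell h_{n_j-\ell}$ to be summed in closed form against the recursion $h_{n+1}=3h_n+1$, producing an integer multiple $m$ of some fixed height at the appropriate scale; concretely, a constant tail of $2$'s contributes, after using $2h_n+2h_{n-1}+\cdots$, a clean multiple of a single height, and a constant tail of $0$'s contributes nothing. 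I would then identify $m$ from the first $r+1$ coefficients $\alpha_0,\dots,\alpha_r$ together with the summed tail, set $k_j'$ to be the remainder, extract a convergent subsequence $\hat T^{-k_j'}\to L'$ by compactness, and invoke Lemma~\ref{Lemma:suite reduite}.

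For the second assertion, the strategy is entirely quantitative: I would show the measure $\nu$ associated to $L$ via the correspondence $L=L_\nu$ has $\delta(\nu)=0$, whence $L=\Theta$ by Lemma~\ref{Lemma:convergence to theta}. The idea is to iterate Lemma~\ref{Lemma:suite reduite} finitely or infinitely many times: each index $\ell$ with $\alpha_\ell\notin\{0,2\}$—or more carefully, each place where the expansion forces a genuine convolution factor—contributes a polynomial $P_{m}$ with $m\ge1$ to an emerging product $P_{m_1}(\hat T)\cdots P_{m_r}(\hat T)$ times a tail. The hypothesis of infinitely many $\ell$ with $\alpha_\ell\neq 2$ and infinitely many with $\alpha_\ell\neq 0$ guarantees that infinitely many such genuine factors $P_{m_i}$ with $m_i\ge 1$ accumulate, so that $\nu$ is a weak limit of normalized convolutions $\pi_{m_1}\ast\cdots\ast\pi_{m_r}$ with $r\to\infty$. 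By Lemma~\ref{Lemma:majoration de delta de convolution}, $\delta$ does not increase under convolution with a tail measure, and by Lemma~\ref{Lemma: delta de convolees}, $\delta(\pi_{m_1}\ast\cdots\ast\pi_{m_r})\to0$ as $r\to\infty$ uniformly in the choice of $m_i\ge 1$. Combining these gives $\delta(\nu)=0$, hence $L=\Theta$.

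The main obstacle, and the step I would spend the most care on, is bookkeeping the relation between the greedy expansion in the heights $(h_n)$ and the factorization into polynomials $P_m$. Because $h_{n+1}=3h_n+1$ rather than exactly $3h_n$, each peeling step via Lemma~\ref{Lemma:suite reduite} shifts the scale and introduces lower-order corrections that must be absorbed into the $k_j'$ remainder while verifying $k_j'/h_{n_j}\to 0$ at each stage; I must check that these corrections do not accumulate destructively and that the extracted $m$ at each step is a genuine positive integer determined by the local pattern of the $\alpha_\ell$'s. In particular I must confirm that a run of coefficients equal to $1$ does \emph{not} terminate the process (it should feed into a single nontrivial $P_m$ via the recurrence structure of Theorem~\ref{Th:recurrence for P_m}), whereas the two exceptional eventually-constant patterns $2$ and $0$ are exactly the ones that terminate it with a well-defined $L'$. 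Making this correspondence precise—essentially reading off the factorization from the $3$-adic-like digits—is where the combinatorial content of the proposition lives, and it is where I would expect to invoke the recurrence formulas for $P_m$ and the degree bookkeeping $d_m$ from Section~\ref{sec:properties} to control the sizes $m_i$.
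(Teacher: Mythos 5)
Your first assertion is handled essentially as in the paper: the identity $h_{n+s}=3^s(h_n+1/2)-1/2$ turns $\sum_{\ell=0}^r\alpha_\ell h_{n_j-\ell}$ into $m_rh_{n_j-r}+u_r$ with $m_r=\sum_{\ell\le r}\alpha_\ell 3^{r-\ell}$ and a \emph{fixed} correction $u_r$, an eventual tail of $2$'s sums to $h_{n_j-r}-h_{n_j-r-r'}-r'$ (a carry incrementing $\alpha_r$ by one), compactness gives a convergent subsequence for the remainder, and Lemma~\ref{Lemma:suite reduite} splits off $P_{m}(\hat T)$. Your worry about the $+1$ corrections accumulating is harmless: they are absorbed into the constant $u_r$. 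This part is fine.

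The second assertion, however, contains a genuine gap: you propose to iterate Lemma~\ref{Lemma:suite reduite} to factor $L=P_{m_1}(\hat T)\cdots P_{m_r}(\hat T)L'$ for arbitrarily large $r$, but that lemma requires $k_j'/h_{n_j}\to0$, and in the mixed-digit case this hypothesis \emph{fails at every stage}: after peeling off $m_rh_{n_j-r}+u_r$, the remainder satisfies only $0\le k_j'<h_{n_j-r}$ and is genuinely of order $h_{n_j-r}$ (e.g.\ if $\alpha_{r+1}=2$ then $k_j'\ge 2h_{n_j-r-1}\sim\tfrac23 h_{n_j-r}$). This is precisely why the first part's hypotheses single out the eventually-$0$ and eventually-$2$ tails, and why the factorization identity $\nu=\pi_{m_1}\ast\cdots\ast\pi_{m_r}\ast\nu'$ you invoke is used in the paper only in the proof of Theorem~\ref{Thm:weak limits}, where $r_L=\infty$ is an \emph{assumption}. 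Your per-digit accounting is also off: the hypothesis allows all $\alpha_\ell\in\{0,2\}$ (say alternating), so there need not be any digit outside $\{0,2\}$ producing a ``genuine convolution factor''; what matters is that $d_{m_r}\to\infty$, read off from the $3$-expansion of $m_r$ via Section~\ref{Sec:degree} (number of $1$'s plus twice the number of $2$-blocks), which the two infiniteness hypotheses guarantee. The paper then deduces from Lemmas~\ref{Lemma: coeff polynome} and~\ref{Lemma:convergence to theta} that $P_{m_r}(\hat T)\hat T^k\to\Theta$ uniformly in $k\in\ZZ$, and neutralizes the uncontrolled remainder by a wrap-around analysis you never address: $B_0$ is cut into four pieces according to whether $T^{k_j'}$ keeps them inside Tower $(n_j-r)$ or wraps around with or without a spacer, so that Lemma~\ref{Lemma: uniformite dans la tour} applies three times and yields $\mu(A_0\cap T^{k_j}B_0)\le 3\mu(A_0)\mu(B_0)+6\varepsilon+1/h_{(n_j-r)}$; hence any limit self-joining satisfies $\rho\le3\,\mu\otimes\mu$, and ergodicity of $\mu\otimes\mu$ forces $\rho=\mu\otimes\mu$, i.e.\ $L=\Theta$. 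Without this (or a substitute for the failed iteration of Lemma~\ref{Lemma:suite reduite}), your argument for $\delta(\nu)=0$ does not get off the ground.
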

\begin{proof}
Since $h_{n+s}=3^s(h_{n}+1/2)-1/2$, for all $n,s\ge0$, note that for all $r\ge 0$ and all $j$ large enough,
\begin{equation}
  \label{eq:changement de variables}
  \sum_{\ell=0}^r\,\alpha_\ell  h_{n_j-\ell} = m_r\,h_{(n_j-r)} + u_r,
\end{equation}
where 
$$ m_r\egdef \alpha_{0}3^r + \alpha_{1}3^{r-1}+ \dots + \alpha_{r}   \quad\mbox{and}\quad   u_r\egdef \sum_{\ell=0}^{r-1}\,\alpha_\ell h_{r-\ell-1}. $$

First assume that $\alpha_\ell=0$ for all $\ell> r$. Then for $j$ large enough
$$
k_j = \alpha_{0} h_{n_j} + \alpha_{1} h_{n_j-1}+ \dots + \alpha_{r} h_{(n_j-r)} + k_j'
$$
where $k_j'\ll h_{(n_j-r)}$.
By \eqref{eq:changement de variables}, we can rewrite $k_j$ as
$$
k_j= m_r h_{(n_j-r)} + k_j'+u_r,
$$
where $k_j'+u_r \ll h_{(n_j-r)}$. Extracting a subsequence if necessary, we can assume that $\hat T^{-(k_j'+u_r)}$ converges to some $L'$. We conclude using Lemma~\ref{Lemma:suite reduite}.

Assume now that $\alpha_\ell=2$ for all $\ell> r$. Then for $j$ large enough
$$
k_j = \alpha_{0} h_{n_j} + \alpha_{1} h_{n_j-1}+ \dots + \alpha_{r} h_{(n_j-r)} + 2 \sum_{\ell=1}^{r'}  h_{n_j-r-\ell} + k_j'
$$
where $k_j'< h_{n_j-r-r'}$. 
Since $2 \sum_{\ell=1}^{r'}  h_{n_j-r-\ell} = h_{(n_j-r)} - h_{n_j-r-r'} - r'$, we get 
$$
k_j = \alpha_{0} h_{n_j} + \alpha_{1} h_{n_j-1}+ \dots + (\alpha_{r}+1) h_{(n_j-r)}  + k_j'' ,
$$
where $|k_j''|=| k_j'- h_{n_j-r-r'} - r'|\ll h_{(n_j-r)}$. We get the conclusion using the same argument as above. This proves the first part of the proposition.

\medskip
Assume now that there exist an infinity of $r$ such that $\alpha_r\not=2$ and  an infinity of $r$ such that $\alpha_r\not=0$. By \eqref{eq:changement de variables}, for all $r\ge0$, and for all $j$ large enough (depending on~$r$),
$$
k_j= m_r h_{(n_j-r)} + u_r + k_j',
$$
where $0\le k_j'< h_{(n_j-r)}$.
We know that $d_{m_r}$ goes to infinity as $r\to\infty$ (see Section~\ref{Sec:degree}) by hypotheses on the sequence $(\alpha_\ell)$. 
By Lemma~\ref{Lemma: coeff polynome}, since $\pi_{m_r}$ is unimodal, we get $\delta(\pi_{m_r}\ast\delta_k)\to 0$ as $r\to\infty$ uniformly with respect to $k\in\ZZ$. 

Let us fix $A_0$ and $B_0$ some sets which are union of levels of a fixed tower, say Tower~$\overline{n}$. 
Fix $\varepsilon>0$. By Lemma~\ref{Lemma:convergence to theta}, $P_{m_r}(\hat T)\,\hat T^k\tend{r}{\infty}\Theta$ uniformly with respect to $k\in\ZZ$.  
Hence, we can find $r$ large enough so that for all $k\in\ZZ$, 
\begin{equation}
  \label{eq:convergence vers le produit}
\left| \sum_{i\in\ZZ} \pi_{m_r}(i+k)\, \mu(A_0\cap T^{-i}B_0) - \mu(A_0)\mu(B_0)\right|<\varepsilon.
\end{equation}
Then, we can choose $j$ large enough to satisfy $\alpha_\ell^j=\alpha_\ell$ for all $\ell\le r$, and
\begin{equation}
  \label{eq:uniformite dans n_j-r}
  \sup_{A, B} \left| \mu\left(A\cap T^{m_rh_{(n_j-r)}+u_r}B\right) - \sum_{i\in\ZZ} \pi_m(i+u_r)\,\mu\left( A\cap T^{-i}B\right) \right| < \varepsilon,
\end{equation}
where the supremum is taken over any sets $A$ and $B$ which are union of levels of Tower $(n_j-r)$ (see Lemma~\ref{Lemma: uniformite dans la tour}). We also assume that $(n_j-r)\ge \overline{n}$, so that $A_0$ and $B_0$ are unions of levels in Tower~$(n_j-r)$.

We want to use~\eqref{eq:uniformite dans n_j-r} in order to estimate
$$ \mu(A_0\cap T^{k_j}B_0) = \mu\left(A_0\cap T^{m_rh_{(n_j-r)}+u_r}(T^{k_j'}B_0)\right). $$
The problem is that, although $B_0$ is a union of levels in Tower $(n_j-r)$, this is not always the case for $T^{k'_j}B_0$. Therefore we cut $B_0$ into 4 disjoint parts: $B_0=B_1\sqcup B_2\sqcup B_3\sqcup B_4$, where
\begin{itemize}
  \item $B_1$ is the part of $B_0$ contained in the first $(h_{(n_j-r)}-k'_j)$ levels of Tower $(n_j-r)$, so that $T^{k'_j}B_1$ is a union of levels in Tower $(n_j-r)$ which is included in $T^{k'_j}B_0$.
  \item $B_2$ is the part of $B_0$ contained in the last $(k'_j-1)$ levels of Tower $(n_j-r)$ which is not under a spacer, so that $T^{k'_j}B_2$ is included in a union of levels in Tower $(n_j-r)$ which is itself included in  $T^{k'_j-h_{(n_j-r)}}B_0$.
  \item $B_3$ is the part of $B_0$ contained in the last $(k'_j-1)$ levels of Tower $(n_j-r)$ which is under a spacer, so that $T^{k'_j}B_3$ is included in a union of levels in Tower $(n_j-r)$ which is itself included in  $T^{k'_j-h_{(n_j-r)}-1}B_0$.
  \item $B_4$ is the part of $B_0$ contained in level $h_{(n_j-r)}-k'_j$ of Tower $(n_j-r)$: $\mu(T^{k'_j}B_4)\le 1/h_{(n_j-r)}$.
\end{itemize}
Using three times~\eqref{eq:uniformite dans n_j-r} and \eqref{eq:convergence vers le produit}, we get
\begin{align*}
  \mu(A_0\cap T^{k_j}B_0) \le & \sum_{i\in\ZZ} \pi_m(i+u_r)\,\mu\left( A_0\cap T^{-i}T^{k'_j}B_0\right)\\
  & + \sum_{i\in\ZZ} \pi_m(i+u_r)\,\mu\left( A_0\cap T^{-i}T^{k'_j-h_{(n_j-r)}}B_0\right)\\
  & + \sum_{i\in\ZZ} \pi_m(i+u_r)\,\mu\left( A_0\cap T^{-i}T^{k'_j-h_{(n_j-r)}-1}B_0\right)\\
  & + 3\varepsilon +1/h_{(n_j-r)}\\
  \le\ & 3 \mu(A_0)\,\mu(B_0) + 6\varepsilon +1/h_{(n_j-r)}.
\end{align*}
Hence, the self joining $\rho$ defined by 
$$
\rho(A\times B)\egdef \lim_{j\to\infty} \mu(A\cap T^{k_j}B) = \langle\ind{A},  L\ind{B}\rangle, \quad\forall A, B,
$$
satisfies $\rho\le 3\mu\otimes\mu$. By ergodicity of $\mu\otimes\mu$, we conclude that $\rho=\mu\otimes\mu$ and $L=\Theta$.
\end{proof}

\begin{proof}[proof of Theorem~\ref{Thm:weak limits}]
First, fix integers $n\in\ZZ$, $r\ge1$ and $1\le m_1\le\cdots\le m_r$ . Using $r$ times Lemma~\ref{Lemma:suite reduite}, we easily construct a sequence $(k_j)$ such that 
$$ \lim_{j\to\infty}\hat T^{-k_j} = \prod_{i=1}^r P_{m_i}(\hat T)\hat T^n. $$

Conversely, we now want to prove that any $L\in \L$ has this form. For $L\in\L$, set 
$$
r_L \egdef 
\sup \{0\}\cup\left\{ r\ge1 :  \exists (m_i)_{i\le r},\exists L'\in \L  \mbox{ s.t. } L=\prod_{i=1}^r P_{m_i}(\hat T) L'\right\}.
$$ 
Observe that if $L=\Theta$, then $r_L=\infty$ (because for all $m$, $P_{m}(\hat T)  \Theta=\Theta$). Conversely, we prove that if $r_L=\infty$, then $L=\Theta$: Suppose that $r_L=\infty$.
Then for any $r$, we can write $L=\prod_{i=1}^r P_{m_i}(\hat T) L'$ with $L'\in\L$. We know that $L$ and $L'$ are of the form $L_\nu$ and $L_{\nu'}$ respectively. Moreover, we have 
$$
\nu=\pi_{m_1}\ast\dots\ast\pi_{m_r}\ast\nu'.
$$
By Lemma~\ref{Lemma:majoration de delta de convolution}, 
$\delta(\nu)\le \delta (\pi_{m_1}\ast\dots\ast\pi_{m_r})$. But the  right-hand side goes to 0 as $r\to\infty$ by Lemma~\ref{Lemma: delta de convolees}, so that $\delta(\nu)=0$. We conclude that $\nu(\ZZ)=0$ and $L=\Theta$.

Assume now that $r_L<\infty$. Let $(k_j)$ be such that $L=\lim_j \hat T^{-k_j}$. If the sequence $(k_j)$ is bounded, then $L=\hat T^n$ for some $n\in\ZZ$, otherwise Proposition~\ref{prop:factorization} applies, and since $L\not=\Theta$ there exists $L'\in\L$  and $m\ge1$ such that $L=P_m(\hat T)L'$. We then have $r_L\ge r_{L'}+1$, and we can prove by induction on $r_L$ that $L$ is of the form
$$ L=\prod_{i=1}^r P_{m_i}(\hat T) \hat T^n. $$
\end{proof}

\bibliography{rank-one}

\end{document}